\newtheorem{thm}{Theorem}[section]
\newtheorem{cor}[thm]{Corollary}
\newtheorem{lem}[thm]{Lemma}
\newtheorem{prop}[thm]{Proposition}
\theoremstyle{definition}
\theoremstyle{remark}
\newtheorem{rem}[thm]{Remark}
\numberwithin{equation}{section}
\newcommand{\R}{\mathbb R}
\newcommand{\ov}{\overline}
\newcommand{\s}{\mathbb S}
\newcommand{\h}{\mathbb H}
\newcommand{\eps}{\varepsilon}
\newcommand{\be}{\begin{equation}}
\newcommand{\ee}{\end{equation}}
\begin{document}

\title[Radial graphs of constant mean curvature in Hyperbolic space]{Rearrangements and radial graphs of constant mean curvature in Hyperbolic space}%
\author{D. De Silva}
\address{Department of Mathematics, Barnard College, Columbia University, New York, NY 10027}
\email{\tt  desilva@math.columbia.edu}
\author{J. Spruck}%
\address{Department of Mathematics, Johns Hopkins University, Baltimore, MD 21218}%
\email{\tt  js@math.jhu.edu}%


\begin{abstract} We investigate the problem of finding smooth hypersurfaces
of constant mean curvature in hyperbolic space, which
can be represented as radial graphs over a subdomain of the upper hemisphere. Our approach is variational and our
main results are proved via rearrangement techniques.

\end{abstract}
\maketitle

\section{Introduction}

In this paper we study the problem of finding smooth hypersurfaces
of constant mean curvature in hyperbolic space $\h^{n+1}$, which
can be represented as radial graphs over a domain $\Omega$
strictly contained in the upper hemisphere $\s^n_+ \subset
\R^{n+1}$. This also leads by an approximation process
to the existence and uniqueness of smooth complete hypersurfaces
of constant mean curvature $H \in (-1,1)$ with prescribed asymptotic
boundary $\Gamma$ at infinity, in  case  $\Gamma $ is the boundary of a continuous star-shaped
domain.

We use the half-space model,

$$\h^{n+1}=\{(x,x_{n+1}) \in \R^{n+1} | x_{n+1}>0\}$$

\

\noindent equipped
with the hyperbolic metric $$ds_H^2 = \frac{1}{x_{n+1}^2}ds_E^2,$$

\

\noindent where $ds_E^2$ denotes the Euclidean metric on $\R^{n+1}.$

Let $\Omega \subset \mathbb{S}^n_+$, and  suppose that $\Sigma$ is
a radial graph over $\Omega$ with position vector $X$  in $\R^{n+1}$.
Then we can write

\be\label{graph}X=e^{v(z)} z, \quad z \in \Omega,\ee

\

\noindent for a function $v$
defined over $\Omega$. Assume that $\Sigma$ has constant mean curvature $H$ in hyperbolic
space with respect to the outward unit  normal.  Then $v$ satisfies the
divergence form elliptic equation

\begin{equation}\label{dirichlet1}\textrm{div}_{z}\left(\frac{y^{-n}\nabla{v}}{\sqrt{1+|\nabla v|^2}}\right)=
nH y^{-(n+1)} \quad \text{in} \ \Omega, \end{equation}

\

\noindent where $y=z_{n+1}$ and the divergence and gradient are with respect to the standard metric on the sphere.

We apply direct methods of the calculus of variations, in order to
prove the existence of a smooth solution to \eqref{dirichlet1}.
Let,

\be \label{functional}
\mathcal{I}_\Omega(v) := \int_{\Omega} \sqrt{1+|\nabla v|^2} ~y^{-n}dz  + nH\int_{\Omega}v(z)y^{-(n+1)}dz, \ee

\medskip

\noindent be the energy functional associated to equation \eqref{dirichlet1}. In this variational setting, we will easily obtain the existence
of bounded local minimizers of $\mathcal{I}_\Omega(\cdot)$ in the class
$BV(\Omega)$, as long as $|H|<1$.  However the Dirichlet problem in this
generality needs to be carefully formulated, see Section 2.

Our main objective is to prove a
regularity result which guarantees that such minimizers are
smooth, and hence the associated graphs \eqref{graph} are smooth hypersurfaces of constant mean curvature
in $\h^{n+1}$. We first prove the following result.
\begin{thm}\label{lowd} Assume $n \leq 6$ and let $v \in BV(\Omega) \cap L^{\infty}(\Omega)$ be a local minimizer to
$\mathcal{I}_\Omega(\cdot).$ Then $v \in C^{\infty}(\Omega).$
\end{thm}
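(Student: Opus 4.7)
The strategy is to reduce this to a classical regularity result in geometric measure theory: identify $v$ with its subgraph, show this subgraph is an almost-minimizer of a weighted perimeter, and apply the De Giorgi--Federer--Simons regularity theorem. The dimension hypothesis $n\leq 6$ is exactly Simons' threshold excluding singular area-minimizing cones, so the resulting boundary will be globally smooth.

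First I would pass from the scalar problem to a set-valued one. Let $U_v := \{(z,t)\in\Omega\times\R: t<v(z)\}$ and set the weight $w(z):=y^{-n}$. A direct computation (Cavalieri plus the weighted coarea formula for graphs) gives
\be
\mathcal{I}_\Omega(v) \;=\; P_w(U_v;\Omega\times\R) \;+\; nH\int_{\Omega\times\R} y^{-(n+1)}\,\mathrm{sgn}(t)\,\chi_{U_v}(z,t)\,dz\,dt \;+\; \text{const},
\ee
so a bounded local minimizer $v$ of $\mathcal{I}_\Omega$ corresponds to a local minimizer $U_v$ of the right-hand side among Caccioppoli sets differing from $U_v$ on a compact subset of $\Omega\times\R$. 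Since $v\in L^\infty$, the subgraph $U_v$ sits in a slab over $\Omega$, and on any compact $K\Subset\Omega$ both $w$ and $y^{-(n+1)}$ are smooth and pinched between positive constants.

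Next I would establish that $\partial^* U_v$ is a $(\Lambda,r_0)$-almost minimizer of unweighted perimeter in the sense of Tamanini. Comparing $U_v$ with any competitor $U$ coinciding with it outside a ball $B_r\subset K\times\R$, the $C^1$ regularity of $\log w$ produces a multiplicative error of order $1+O(r)$ between weighted and unweighted perimeter on $B_r$, while the $H$-term contributes at most $nH\int_{U\triangle U_v}y^{-(n+1)}\leq C\,|U\triangle U_v|\leq Cr^{n+1}$; the condition $|H|<1$ is what makes the absorption of this lower-order term into a gauge $\Lambda r^n$ consistent with strict minimality. Tamanini's theorem then gives that $\partial^* U_v$ is a $C^{1,\alpha}$ hypersurface away from a closed singular set of Hausdorff dimension $\leq n-7$. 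Since $n\leq 6$ this set is empty; no vertical piece can occur in the now-smooth $\partial U_v$, so $v$ is locally Lipschitz; equation~\eqref{dirichlet1} becomes uniformly elliptic on compacta, and standard quasilinear Schauder bootstrapping upgrades $v$ to $C^\infty(\Omega)$.

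The main obstacle I expect is not the regularity theorem or the bootstrap, but the careful justification of the almost-minimality inequality --- tracking how the $z$-dependent weight $w=y^{-n}$ and the sign-changing volume perturbation combine so that $|H|<1$ is exactly what is needed to place $U_v$ in Tamanini's class rather than some weaker one. Once that inequality is in hand, the dimension restriction $n\leq 6$ enters solely through Simons' cone classification, and the remainder of the argument is a black-box application of classical results.
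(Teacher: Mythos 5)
Your overall strategy---pass to the subgraph, apply geometric measure theory regularity in $\mathbb{R}^{n+1}$, then return to the scalar function---is exactly the route taken in the paper (there the subgraph is taken radially in $\mathbb{R}^{n+1}$ via $x=e^w z$ rather than in the product $\Omega\times\mathbb{R}$, but these are diffeomorphic and the choice is cosmetic; likewise Tamanini almost-minimizers vs.\ Simon's regularity for $\mathrm{area}+nH\,\mathrm{vol}$ minimizers are interchangeable here). However, there are two genuine gaps in your argument.

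The first and most serious is the passage from ``$\mathcal{I}_\Omega(v)$ equals a perimeter-plus-volume functional on $U_v$'' to ``$U_v$ locally minimizes that functional among Caccioppoli sets.'' The identity you derive shows only that the set functional agrees with $\mathcal{I}_\Omega$ \emph{on subgraphs}; it does not say anything about competitors $E$ that are not subgraphs, and those are precisely the ones needed to invoke the GMT regularity theorem. To close this gap one must prove a rearrangement inequality: given any competitor $E$ agreeing with $U_v$ outside a compact set, its vertical (radial) rearrangement $U_u$, where $u(z)=\int\chi_{\tilde E}(z,w)\,dw - T$, satisfies $\mathcal{P}(U_u)\leq\mathcal{P}(E)$ while the volume term is preserved exactly. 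Only then does minimality of $v$ among functions transfer to minimality of $U_v$ among sets. This Steiner-type rearrangement step is the core of the paper's argument (Propositions~\ref{subgr} and~\ref{rearrange}, feeding into Theorem~\ref{subgraphmin})---indeed it is what the word ``Rearrangements'' in the title refers to---and your proposal omits it entirely, replacing it with an unjustified ``so.''

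The second gap is the step ``no vertical piece can occur in the now-smooth $\partial U_v$, so $v$ is locally Lipschitz.'' This is not automatic: a $C^{1,\alpha}$ hypersurface bounding a subgraph can perfectly well have vertical tangents (the boundary of $\{t< -\sqrt{|z|}\}$ is smooth at the origin yet vertical there). What rules this out here is that the regular part of $\partial U_v$ is a CMC hypersurface in hyperbolic space, so the radial Jacobi field $X\cdot\nu$ (or, in your product coordinates, the vertical component of the normal) satisfies a second-order elliptic equation of the form $\Delta(X\cdot\nu/u)=(n-|A|^2)(X\cdot\nu/u)$. Since $X\cdot\nu\geq 0$ for the boundary of a subgraph, the strong maximum principle forces either $X\cdot\nu>0$ everywhere on the regular part, or $X\cdot\nu\equiv 0$; the latter is incompatible with the surface being a bounded graph on a full-measure set. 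This is Lemma~\ref{radiality} and the corollary that follows it in the paper, and it needs to be stated and proved, not asserted. Once these two steps are supplied, the rest of your argument (Tamanini/Simon regularity with the Simons threshold $n\leq 6$, uniform ellipticity on compacta, Schauder bootstrap) is sound.

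A smaller technical remark: your volume integral with the $\mathrm{sgn}(t)$ factor is not absolutely convergent since $U_v$ is unbounded below. The standard fix, used in the paper, is to truncate to a slab $\{ -T-1<w<T+1\}$ with $T>\|v\|_{L^\infty}$ and record that the functionals differ by a constant $k(T+1)$ independent of the competitor.
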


The elegance of this low dimensional result lies in the fact that
it does not require any kind of a priori gradient bounds, which in
this context may appear computationally tedious.
The proof  is based on the connection between
non-parametric (radial graphs) and parametric  surfaces
of constant mean curvature in hyperbolic space. For the latter,
regularity in low dimensions is well-known (see for example
\cite{S}). We exploit this fact and recover the same regularity result
for radial graphs, via rearrangement techniques. A similar approach
 has been followed in the Euclidean setting to find smooth vertical graphs
 of prescribed mean curvature (see for example \cite{G}). See also \cite{D}
 for an existence and regularity result for a degenerate equation obtained
 via similar techniques.

In order to remove the low dimensional constraint we first analyze the case when the domain $\Omega$
satisfies an appropriate assumption. This allows us to set up and solve the Dirichlet problem for $\mathcal{I}_\Omega(\cdot)$ and obtain smoothness of the minimizer from the smoothness of the boundary data. Indeed, we prove the following result which requires the
construction of appropriate barriers.

\begin{thm}\label{existence} Let $\Omega$ be a subdomain of $\mathbb{S}^n_+$  with $\partial \Omega \in C^2$, and let $\gamma$ be a continuous radial graph over $\partial \Omega$. Let $h$ be  the hyperbolic mean curvature  of the radial cone over $\partial \Omega$ restricted to  $\partial \Omega$.  Then if $h> |H|$ , there exists a unique smooth radial graph
 $\Sigma$  of constant mean curvature H in $\h^{n+1}$ (defined over $\Omega$)  with  boundary $\gamma.$
 \end{thm}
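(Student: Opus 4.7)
The plan is to solve the Dirichlet problem for \eqref{dirichlet1} with boundary datum $\varphi=\log\rho$ on $\partial\Omega$, where $\gamma(z)=\rho(z)z$ is the prescribed radial graph. First I would extend $\varphi$ continuously to $\bar\Omega$ and minimize $\mathcal{I}_\Omega(\cdot)$ in the $BV$ sense over functions with trace $\varphi$, using the standard relaxation that adds an $L^1$ penalty of the trace defect on $\partial\Omega$ so that a minimizer exists. This yields a candidate $v\in BV(\Omega)\cap L^{\infty}(\Omega)$. The goal is then to show that $v$ attains the boundary values $\varphi$ continuously and admits an a priori Lipschitz estimate, for once $|\nabla v|$ is controlled, \eqref{dirichlet1} becomes uniformly elliptic and classical Schauder theory gives $v\in C^{\infty}(\Omega)\cap C^{0}(\bar\Omega)$; this is exactly what allows us to bypass the dimension restriction in Theorem \ref{lowd}.

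The key step is the construction of barriers at $\partial \Omega$ using the hypothesis $h>|H|$. For each $p\in\partial\Omega$ I would seek local upper and lower barriers $w_p^{\pm}$, defined on a one-sided neighborhood of $p$ in $\bar\Omega$, satisfying \eqref{dirichlet1} with inequality of the appropriate sign, agreeing with $\varphi$ at $p$, and dominating (resp.\ dominated by) $\varphi$ elsewhere on $\partial\Omega$. Natural candidates are pieces of umbilic hypersurfaces of hyperbolic mean curvature $H$ in $\h^{n+1}$ (geodesic spheres, horospheres or equidistants, depending on the size of $|H|$) represented as radial graphs. Because at $p$ the radial cone over $\partial\Omega$ has hyperbolic mean curvature $h(p)>|H|$, such an $H$--model can be fitted tangentially on either side of $\gamma$ near $p$ in the radial direction and slid inward until it touches $\partial\Omega$ transversally; this is the same mechanism underlying the classical Serrin condition for quasilinear boundary value problems. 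A global super/subsolution obtained by joining such barriers, together with the $C^2$ regularity of $\partial\Omega$, also furnishes an $L^{\infty}$ bound on $v$ and a uniform modulus of continuity at $\partial\Omega$.

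From the local barriers one extracts a boundary gradient estimate $|\nabla v|\le C$ on $\partial\Omega$, and combining this with the standard interior gradient bound for divergence-form CMC equations (via the maximum principle applied to $\log\sqrt{1+|\nabla v|^2}$ with a suitable cutoff, treating the weight $y^{-n}$ as a smooth inhomogeneity), one obtains a global Lipschitz estimate. The ellipticity of \eqref{dirichlet1} then promotes $v$ to $C^{\infty}$, and uniqueness follows from the comparison principle for the divergence-form operator in \eqref{dirichlet1}, which reflects the strict monotonicity of $p\mapsto p/\sqrt{1+|p|^2}$. The hard part is clearly the barrier construction: one must identify the correct family of CMC model hypersurfaces in $\h^{n+1}$ that can be written as radial graphs in the ambient hemispherical coordinates, and verify quantitatively that the strict inequality $h>|H|$ permits these models to be fitted at every boundary point, uniformly in $p\in\partial\Omega$. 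Once this geometric ingredient is in place, the remaining analytic steps---$L^\infty$ estimates, boundary and interior gradient bounds, and elliptic regularity---follow along by now standard lines.
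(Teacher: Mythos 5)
Your outline correctly identifies the main pieces --- minimizing $\mathcal{I}_\Omega$ over $BV$, building Serrin-type barriers from the hypothesis $h>|H|$, and upgrading a Lipschitz solution to $C^\infty$ by elliptic regularity --- but there is a genuine logical gap precisely where the paper has to work hardest: the passage from ``$v$ is a bounded $BV$ minimizer'' to ``$v$ is Lipschitz.'' You propose to obtain the interior Lipschitz bound ``via the maximum principle applied to $\log\sqrt{1+|\nabla v|^2}$ with a suitable cutoff.'' That computation requires $v$ to be $C^3$ (or at least $C^2$) to begin with; applied to a $BV$ or even $W^{1,1}$ minimizer it is circular. The same applies to the boundary gradient estimate you extract from barriers: the barrier comparison yields a modulus of continuity (and hence continuity up to $\partial\Omega$), but producing $|\nabla v|\le C$ \emph{on} $\partial\Omega$ again presupposes that $v$ is differentiable. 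So as written your argument assumes the regularity it is trying to establish.

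What the paper does instead (in the proof of Theorem~\ref{bigdomain}) is to establish Lipschitz continuity of the $BV$ minimizer directly, \emph{without} any a priori smoothness, via a translation-and-dilation comparison argument: for any horizontal $\tau$ and small $\eps>0$ one shows that the surface $X=e^{v(z)+K\eps}z+\eps\tau$ lies above $X=e^{v(z)}z$. This claim is proved by passing to the parametric picture of Section~3, using Propositions~\ref{subgr}--\ref{rearrange} and the uniqueness result Lemma~\ref{minuniq} to identify the translated subgraph with the subgraph of another $\mathcal{I}$-minimizer, and then invoking the comparison principle (Lemma~\ref{uniqtrace}) at the level of $BV$ traces. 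Geometrically the claim says there is a cone of fixed opening above every point of the graph, and that yields the Lipschitz bound directly for the $BV$ minimizer. This rearrangement machinery is the key bridge your outline is missing. Two smaller discrepancies: the paper's barriers are the standard distance-function barriers $\phi\pm\frac{1}{K}\log(1+\beta d)$ rather than pieces of umbilic CMC hypersurfaces (your route could work but needs its own verification, and is not what is done); and you should note that continuous boundary data $\gamma$ is handled by approximating with $C^2$ data, applying Theorem~\ref{existence2}, and passing to the limit using the interior gradient bound of Proposition~\ref{th2}, which is a separate step the paper carries out explicitly.
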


Then using standard approximation techniques, a corollary of Theorem \ref{existence}, and an interior
gradient bound which is of independent interest, we prove the following
result.

\begin{thm}\label{anyd} Let $v \in BV(\Omega) \cap L^{\infty}(\Omega)$ be a local minimizer to
$\mathcal{I}_\Omega(\cdot).$ Then $v \in C^{\infty}(\Omega).$
\end{thm}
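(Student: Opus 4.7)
The plan is to establish smoothness locally. Since smoothness is a local property and $v\in L^\infty$, it suffices to show that for each $z_0\in\Omega$ there exists a small geodesic ball $B_r=B_r(z_0)\subset\subset\Omega$ on which $v$ coincides with a smooth solution of \eqref{dirichlet1}. The key observation is that for $r$ small enough, the hyperbolic mean curvature $h$ of the radial cone over $\partial B_r$ blows up like $1/r$ and therefore exceeds $|H|$; hence the hypothesis of Theorem~\ref{existence} is automatically satisfied on every sufficiently small ball $B_r \subset\subset \Omega$. Consequently (this is the corollary of Theorem~\ref{existence} alluded to before the statement), the Dirichlet problem for \eqref{dirichlet1} is smoothly solvable on $B_r$ for any continuous radial boundary data.

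I then regularize $v$: after extending $v$ to a slightly larger ball $B_{r'}\subset\subset\Omega$ with $r' > r$, mollify to obtain $v_k\in C^\infty(\overline{B_r})$ with $v_k\to v$ in $L^1(B_r)$ and $\|v_k\|_{L^\infty}\le\|v\|_{L^\infty}$. Let $u_k$ denote the smooth radial CMC graph over $B_r$ supplied by the corollary with $u_k=v_k$ on $\partial B_r$. Comparison with the barriers used in the proof of Theorem~\ref{existence} yields a uniform bound $\|u_k\|_{L^\infty(B_r)}\le C(\|v\|_{L^\infty},r)$.

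The interior gradient bound mentioned in the paragraph preceding the theorem now provides $\|\nabla u_k\|_{L^\infty(B_{r/2})}\le C$, uniformly in $k$, depending only on $\|u_k\|_{L^\infty}$. With gradient bounds in hand, equation~\eqref{dirichlet1} is uniformly quasilinear elliptic with smooth coefficients on $B_{r/2}$; Schauder theory delivers uniform $C^{m,\alpha}$-bounds on $B_{r/4}$ for every $m$, and a diagonal extraction yields a subsequence converging in $C^\infty_{\mathrm{loc}}(B_r)$ to a smooth function $u$ solving \eqref{dirichlet1}.

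It remains to identify $u=v$ on $B_{r/2}$, and this I expect to be the main obstacle. Each $u_k$ is the unique minimizer of $\mathcal{I}_{B_r}(\cdot)$ with boundary trace $v_k$, while $v$ is a local minimizer of $\mathcal{I}_\Omega(\cdot)$. The idea is to use $u_k$ as a competitor for $v$ by gluing $u_k$ to $v$ across a thin annular shell $B_r\setminus B_{r-\delta}$, controlling the perimeter-type error from the gluing via $\|v\|_{L^\infty}$ and a Fubini argument to select a good radius; combining lower semicontinuity of $\mathcal{I}$ on $BV$ with the smooth convergence $u_k\to u$ and the convergence $v_k\to v$ in $L^1$ then forces $\mathcal{I}_{B_{r/2}}(u)=\mathcal{I}_{B_{r/2}}(v)$, whence $u=v$ a.e.\ on $B_{r/2}$ by uniqueness of the smooth minimizer. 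The delicate point is precisely handling the BV-trace of $v$ on $\partial B_r$ against the smoothed trace $v_k$; this is where the careful selection of radius, together with the uniform $L^\infty$-control of $v$, becomes essential.
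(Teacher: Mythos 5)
Your outline reproduces the broad strategy of the paper's proof: solve the Dirichlet problem on small geodesic balls using Corollary~\ref{smalldomains} (whose curvature hypothesis holds automatically because the hyperbolic mean curvature of the radial cone over a small geodesic sphere blows up like $1/\rho$), obtain uniform interior gradient estimates from Proposition~\ref{th2}, and extract a smooth limit solving \eqref{dirichlet1}. The genuine difference from the paper --- and the source of a gap --- is in the choice of approximating boundary data and, consequently, in how the limit is identified with~$v$.

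The paper does not mollify $v$. Instead it invokes the Section~3 partial regularity (Theorem~\ref{subgraphmin} plus the parametric regularity theory of~\cite{S}), which gives a priori that $v$ is smooth outside the closed set $\tilde{S}=\textrm{proj}_\Omega S$ with $H_{n-6}(\tilde{S})=0$. The boundary data $\phi_k$ is then chosen to agree with $v$ on $\partial B\setminus S_k$, where $S_k$ is an open neighborhood of $\tilde{S}$ with $H_{n-1}(S_k\cap\partial B)\to 0$. This is used in two essential ways: (a) it gives $\phi_k\to v$ in $L^1(\partial B)$, and more importantly (b) it makes the paper's \emph{trace claim} provable --- at each regular boundary point $z_0\in\partial B\setminus\tilde{S}$ one has $\phi_j\equiv v$ locally for $j$ large, so the approximants $v_j$ are sandwiched between barriers pinned to the value $v(z_0)$, whence $\tilde{v}(z_0)=v(z_0)$. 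Your proposal bypasses all of this and therefore needs an independent argument for identifying the limit.

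The gluing scheme you sketch does not supply that argument as written. Interpolating between $u_k$ on $B_{r-\delta}$ and $v$ outside $B_r$ across the annulus produces a gradient error of order $\int_{\text{annulus}}|\nabla\chi|\,|u_k-v|$, and nothing you have proved makes $u_k$ close to $v$ there; a priori the mismatch is $O(\|v\|_{L^\infty})$ and does not shrink as $\delta\to 0$, so controlling the error ``via $\|v\|_{L^\infty}$'' does not close the loop. The step that saves your strategy is to glue \emph{exactly} on $\partial B_r$, exploiting that $u_k=v_k$ on $\partial B_r$ by construction. Set $\hat{u}_k=u_k$ in $B_r$ and $\hat{u}_k=v$ in $B_{r'}\setminus B_r$ for a slightly larger ball $B_{r'}\subset\subset\Omega$. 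By the trace formula used in the proof of Theorem~\ref{existmin}, the jump across $\partial B_r$ contributes exactly $\int_{\partial B_r}|v_k-v|\,y^{-n}\,dH_{n-1}$, and a Fubini/Lebesgue-point selection of $r$ yields $v_k\to v$ in $L^1(\partial B_r)$. Combining this with the minimality of $u_k$ for $\mathcal{I}^{v_k}_{B_r}$ (Remark~\ref{sol=min}), lower semicontinuity of $\mathcal{I}_{B_{r'}}$, and the local minimality of $v$ on the strictly larger ball $B_{r'}$, one gets $\mathcal{I}_{B_{r'}}(\hat{u})=\mathcal{I}_{B_{r'}}(v)$ with $\hat{u}$ and $v$ having the same trace on $\partial B_{r'}$; uniqueness (Remark~\ref{unique}) then forces $u=v$ on $B_r$. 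This repaired identification actually avoids the Section~3 partial regularity entirely, but it is not what you wrote: as stated, your gluing error is not small and the step ``$\mathcal{I}_{B_{r/2}}(u)=\mathcal{I}_{B_{r/2}}(v)$'' does not follow.
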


Finally, by a limiting argument using the afore mentioned barriers we
recover the following result from \cite{GS}.

\begin{thm}\label{global}Let $\Gamma$ be the boundary of a continuous star-shaped
domain in $\R^n$ and let $|H|<1$. Then there exists a
unique hypersurface $\Sigma$ of constant mean curvature $H$ in
$\h^{n+1}$ with asymptotic boundary $\Gamma.$ Moreover $\Sigma$
may be represented as the radial graph over $\s^n_+ \subset
\R^{n+1}$ of a function in $C^\infty(\s_+^n) \cap
C^0(\ov{\s_+^n}).$
\end{thm}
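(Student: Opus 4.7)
Since $\Gamma$ bounds a continuous star-shaped domain in $\R^n$, write $\Gamma = \{r(\omega)\omega : \omega \in S^{n-1}\}$ with $r$ a continuous positive function on $S^{n-1}$, and identify $S^{n-1}$ with the equator $\partial\s^n_+$. The plan is to produce $v \in C^\infty(\s^n_+) \cap C^0(\ov{\s^n_+})$ solving \eqref{dirichlet1} with boundary trace $\log r$ on $\partial\s^n_+$: the radial graph $X = e^{v(z)}z$ then satisfies $X_{n+1} = e^{v(z)}z_{n+1} \to 0$ while its projection onto $\R^n$ tends to $r(\omega)\omega$ as $z \to \omega \in \partial\s^n_+$, so its asymptotic boundary is exactly $\Gamma$. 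I build $v$ by a two-fold approximation: smoothing $r$, and solving Dirichlet problems via Theorem \ref{existence} on subdomains exhausting $\s^n_+$.

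\noindent\textbf{Steps.} Exhaust $\s^n_+$ by $\Omega_\delta = \{z \in \s^n_+ : z_{n+1} > \delta\}$, whose boundary is a round sphere at height $\delta$. A direct computation using the conformal relation between Euclidean and hyperbolic mean curvatures shows that, with the appropriate orientation, the hyperbolic mean curvature of the radial cone over $\partial\Omega_\delta$ evaluated on $\partial\Omega_\delta$ equals
\[
h_\delta = \frac{(n-1)+\cos^2\alpha_\delta}{n\cos\alpha_\delta},\qquad \sin\alpha_\delta = \delta,
\]
which tends to $1$ as $\delta \to 0$; since $|H|<1$, we have $h_\delta > |H|$ for all $\delta < \delta_0$. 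Next, approximate $\log r$ uniformly on $S^{n-1}$ by smooth functions $\varphi_k$ and set $\bar v_k(z) := \varphi_k(z'/|z'|)$, where $z' = (z_1,\dots,z_n)$; this is smooth on a neighborhood of every $\partial\Omega_\delta$. For each $k$ and $\delta < \delta_0$, Theorem \ref{existence} yields a unique smooth solution $v_{k,\delta}$ of \eqref{dirichlet1} on $\Omega_\delta$ with $v_{k,\delta}|_{\partial\Omega_\delta} = \bar v_k$. Pass to the limit $\delta \to 0$: uniform $L^\infty$ bounds come from comparison with constants and cone-type sub/supersolutions; interior $C^\infty$ bounds on compact subsets of $\s^n_+$ follow from the interior gradient estimate behind Theorem \ref{anyd}; and local barriers at each $\omega \in \partial\s^n_+$, built from slightly perturbed cones and horospheres whose hyperbolic mean curvatures strictly dominate $|H|$, force the limit $v_k \in C^\infty(\s^n_+) \cap C^0(\ov{\s^n_+})$ to attain the boundary values $\varphi_k$ continuously. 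Finally, let $k \to \infty$: the comparison principle gives $\|v_k - v_j\|_{L^\infty(\s^n_+)} \le \|\varphi_k - \varphi_j\|_{L^\infty(S^{n-1})}$, so $\{v_k\}$ is uniformly Cauchy; its limit $v$ lies in $C^\infty(\s^n_+) \cap C^0(\ov{\s^n_+})$, has trace $\log r$, and its radial graph has asymptotic boundary $\Gamma$. Uniqueness follows by comparing any two such solutions on each $\Omega_\delta$ and observing that their boundary values on $\partial\Omega_\delta$ become uniformly close as $\delta \to 0$, since both hypersurfaces share the asymptotic boundary $\Gamma$.

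\noindent\textbf{Main obstacle.} The crucial point is the barrier construction at $\partial\s^n_+$, uniform in $\delta$, which forces $v_{k,\delta}$ to attain the prescribed boundary data in the limit. One needs local upper and lower comparison hypersurfaces through each $\omega \in \partial\s^n_+$ whose hyperbolic mean curvatures strictly dominate $|H|$; the natural candidates are tilted cones and horospheres tangent at $\omega$, exploiting both the strict inequality $|H|<1$ and the cone condition already used in Theorem \ref{existence}. Once these uniform boundary barriers are in place, the passage to the limit, the interior smoothness, the identification of the asymptotic boundary, and the uniqueness argument are all fairly routine.
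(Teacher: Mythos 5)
Your proposal is correct and follows essentially the same strategy as the paper: exhaust $\s^n_+$ by spherical caps, solve the Dirichlet problem on each cap, use interior gradient estimates to pass to the limit, squeeze by barriers at the equator (which exist uniformly precisely because $|H|<1$; this is the content of Proposition~\ref{bar} together with Remark~\ref{strictbarrier}, which your "main obstacle'' paragraph correctly identifies), and finally reduce continuous boundary data to smooth data by comparison. Your explicit computation of $h_\delta$ verifies the hypothesis of Theorem~\ref{existence} on each cap; the paper instead obtains $v_\epsilon$ as a $BV$ minimizer on $S_\epsilon$ with boundary data $\psi_\epsilon$ chosen between the barriers and invokes Theorem~\ref{anyd} for smoothness, but these routes are interchangeable and the overall argument is the same.
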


The paper is organized as follows. In Section 2, after briefly introducing
some notation, we set up and solve the Dirichlet problem for our energy
functional in the class of $BV$ functions. Then, in Section 3, we prove
our low dimensional result Theorem \ref{lowd}, via rearrangement techniques.
The proof of Theorem \ref{existence} is exhibited in Section 4. In Section 5,
we present the proof of an interior gradient bound for smooth solutions to equation
\eqref{dirichlet1} and then we apply it together with a Corollary of
Theorem \ref{existence} to
remove the dimensional constraint and prove Theorem \ref{anyd}. Finally,
we
conclude Section 5 by sketching the proof of Theorem \ref{global}.

\section{The Dirichlet problem for the energy functional
$\mathcal{I}_\Omega(\cdot)$}

\subsection{Notation.} Throughout this paper we denote by $\mathbb{S}^n$ the standard unit sphere in $\mathbb{R}^{n+1}$ and by $\mathbb{S}^{n}_+$ the upper
hemisphere. We use $\textrm{div}_z$ and $\nabla$ to denote respectively the
divergence and the covariant gradient on $\mathbb{S}^n$. Also, we let $\textbf{e}$ be the unit vector in the positive $x_{n+1}$
direction in $\R^{n+1}$ and

$$y=\textbf{e}\cdot z, \quad \text{for} \  z \in \s^n,$$

\medskip

\noindent where `$\cdot$' denotes the Euclidean inner
product in $\R^{n+1}.$

\medskip

We recall the following fact, which will be used in the proof of the existence of minimizers in the next subsection.

\begin{rem}\label{horosphere} Assume $|H|<1$. Let $B_R(a)$ be a ball of radius $R$
centered at $a=(a',-HR) \in \R^{n+1}$ where $a' \in \R^n.$ Then
$S=\partial B_R(a) \cap \h^{n+1}$ has constant hyperbolic mean
curvature $H$ with respect to its outward normal. Analogously, let
$B_R(b)$ be a ball of radius $R$ centered at $b=(b',HR) \in
\R^{n+1}$ where $b' \in \R^n.$ Then $S=\partial B_R(b) \cap
\h^{n+1}$ has constant hyperbolic mean curvature $H$ with respect
to its inward normal.\end{rem}

\subsection{Existence of minimizers.} We now formulate and solve the Dirichlet problem for the functional
$\mathcal{I}_\Omega(\cdot)$ in the Introduction.

Let $\Omega \subset \mathbb{S}^n_+$ ; for a function $v \in
BV(\Omega)$ define,

\begin{eqnarray*}
\int_{\Omega} \sqrt{1+|\nabla v|^2} y^{-n}  :=
\sup\{\int_{\Omega} v(z) \textrm{div}_z[\tilde{\gamma}
y^{-n}] dz + \int_{\Omega} \gamma_{n+1} y^{-n}dz
:\\\nonumber
   \gamma=(\tilde{\gamma}, \gamma_{n+1}) \in C_0^1(\Omega, T\Omega \times
\mathbb{R}), |\tilde{\gamma}|^2_{\mathbb{S}^n} +
|\gamma_{n+1}|^2\leq 1\}.
\end{eqnarray*}

\

\noindent Here we are denoting with $dz$ and
$|\cdot|_{\mathbb{S}^n}$ respectively the measure and the
length on the standard unit sphere.

Let $v \in BV(\Omega)$ and define the energy functional

$$\mathcal{I}_\Omega(v) := \int_{\Omega} \sqrt{1+|\nabla v|^2} y^{-n}  + nH\int_{\Omega}v(z)y^{-(n+1)}dz, $$

\

\noindent where $H$ is a constant with $|H|<1.$ In what follows we
denote

$$A_\Omega(v):= \int_{\Omega} \sqrt{1+|\nabla v|^2} y^{-n}, $$

$$V_\Omega(v) := \int_{\Omega}v(z)y^{-(n+1)}dz.
$$

\

\noindent We omit the subscript $\Omega$ from the definitions above, whenever there is no possibility of confusion.

\medskip

The Dirichlet problem for the energy functional
$\mathcal{I}_\Omega(\cdot)$ consists in minimizing this functional
 among all $v \in BV(\Omega)$ whose trace on
$\partial \Omega$ is a prescribed function $\phi \in
L^1(\partial\Omega).$ However, this problem may not be solvable in
such generality. The following proposition suggests an alternative
form of the Dirichlet problem.

\begin{prop} \label{modifieddirichlet}Assume $\partial \Omega$ is
$C^1$ and let $\phi \in L^1(\partial \Omega).$ Then,

\begin{align*}
&\inf\{\mathcal{I}(v) : v \in BV(\Omega), v=\phi \ \mbox{on} \
\partial \Omega\}=\\ &\inf\{\mathcal{I}(v) + \int_{\partial \Omega}
|v-\phi|y^{-n} dH_{n-1}; v \in BV(\Omega)\}.\end{align*}
\end{prop}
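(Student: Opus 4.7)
The plan is to establish the two inequalities separately. Denote the left-hand side of the asserted identity by $I_1$ and the right-hand side by $I_2$. The inequality $I_2 \le I_1$ is immediate: any competitor $v$ admissible for $I_1$ has inner trace $\phi$ on $\partial\Omega$, so the boundary integral $\int_{\partial\Omega}|v-\phi|y^{-n}\,dH_{n-1}$ vanishes and $v$ is also admissible for $I_2$ with the same energy.

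The substantive inequality is $I_1 \le I_2$. Given $v \in BV(\Omega)$ with inner trace $\psi := v|_{\partial\Omega}$, the idea is to modify $v$ in a thin collar adjacent to $\partial\Omega$ so that the new function assumes the prescribed boundary value $\phi$, while its area exceeds $A_\Omega(v)$ by a contribution converging to $\int_{\partial\Omega}|\psi-\phi|y^{-n}\,dH_{n-1}$. Since $\partial\Omega$ is $C^1$, the signed distance $d(\cdot,\partial\Omega)$ is $C^1$ in a tubular neighborhood $U_{\eps_0}\subset\Omega$ of $\partial\Omega$. Using a standard trace-extension theorem on $C^1$ boundaries, extend $\phi$ to a function $\tilde\phi\in W^{1,1}(U_{\eps_0})$ whose trace on $\partial\Omega$ is $\phi$ (the weight $y^{-n}$ is uniformly two-sided bounded on $U_{\eps_0}$ provided $\Omega$ is bounded away from the equator, a condition one may reduce to by exhaustion). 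For $\eps\in(0,\eps_0)$ let $\Omega_\eps := \{z\in\Omega : d(z,\partial\Omega)>\eps\}$ and set
$$v_\eps := v\,\chi_{\Omega_\eps} + \tilde\phi\,\chi_{\Omega\setminus\Omega_\eps}.$$
Then $v_\eps\in BV(\Omega)$ with trace $\phi$ on $\partial\Omega$.

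The $BV$ decomposition of $v_\eps$ across the interface $\partial\Omega_\eps$ gives
$$A_\Omega(v_\eps)=A_{\Omega_\eps}(v)+A_{\Omega\setminus\Omega_\eps}(\tilde\phi)+\int_{\partial\Omega_\eps}|v-\tilde\phi|\,y^{-n}\,dH_{n-1},$$
where $v-\tilde\phi$ is interpreted as the jump of the two one-sided traces. The volume term $V_\Omega(v_\eps)$ splits analogously as an integral over $\Omega_\eps$ plus one over the shrinking collar. As $\eps\to 0^+$, the anisotropic total variation being a Radon measure on $\Omega$ gives $A_{\Omega_\eps}(v)\to A_\Omega(v)$; since $\tilde\phi\in W^{1,1}$ and $|\Omega\setminus\Omega_\eps|\to 0$, both $A_{\Omega\setminus\Omega_\eps}(\tilde\phi)\to 0$ and $V_\Omega(v_\eps)\to V_\Omega(v)$. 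The interface integral converges to $\int_{\partial\Omega}|\psi-\phi|\,y^{-n}\,dH_{n-1}$. Combining these yields $\mathcal{I}(v_\eps)\to\mathcal{I}(v)+\int_{\partial\Omega}|\psi-\phi|y^{-n}\,dH_{n-1}$, and since each $v_\eps$ is admissible for $I_1$, taking $\eps\to0$ and then the infimum over $v\in BV(\Omega)$ produces $I_1\le I_2$.

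The main obstacle is the last limit: showing that the inner traces of $v$ on the one-parameter family of $C^1$ hypersurfaces $\partial\Omega_\eps$ converge in $L^1(\partial\Omega,y^{-n}dH_{n-1})$ to $\psi$ as $\eps\to0^+$. This is the standard but nontrivial continuity property of $BV$ traces under a smooth normal deformation of the underlying boundary, typically derived through slicing and coarea arguments; the $C^1$ regularity of $\partial\Omega$ enters essentially here. Everything else in the argument is routine bookkeeping once this trace continuity is granted.
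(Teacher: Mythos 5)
Your argument is correct in outline, and it takes a genuinely different route from the paper. The paper's proof is a ``corrector'' argument: it invokes Gagliardo's trace extension theorem (Theorem~2.16 of Giusti) in its sharp form to produce, for each $\eps>0$, a $w\in W^{1,1}(\Omega)$ whose trace corrects that of $v$ to $\phi$ on $\partial\Omega$ while satisfying $\int_\Omega|\nabla w|\,y^{-n}\leq(1+\eps)\int_{\partial\Omega}|v-\phi|\,y^{-n}\,dH_{n-1}$ and a matching smallness of $\int_\Omega|w|\,y^{-(n+1)}$; then $u=v+w$ is admissible and the elementary pointwise inequality $\sqrt{1+|\nabla u|^2}\leq\sqrt{1+|\nabla v|^2}+|\nabla w|$ gives $\mathcal{I}(u)\leq\mathcal{I}(v)+(1+2\eps)\int_{\partial\Omega}|v-\phi|\,y^{-n}\,dH_{n-1}$. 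You instead perform a ``collar replacement'' $v_\eps=v\,\chi_{\Omega_\eps}+\tilde\phi\,\chi_{\Omega\setminus\Omega_\eps}$ and pass to the limit $\eps\to0^+$ after decomposing the total variation across the interface $\partial\Omega_\eps$. Both methods end up needing a trace-extension theorem (you also implicitly need one to produce $\tilde\phi$, since $L^1(\partial\Omega)$ is the full trace space of $W^{1,1}$), but the load is carried by different tools: the paper exploits the \emph{sharp} $(1+\eps)$-constant in Gagliardo; you instead need the $L^1$-continuity of the inner traces of a $BV$ function along the normal foliation $\partial\Omega_\eps\to\partial\Omega$, which you correctly flag as the nontrivial step. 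That continuity (for a.e.\ $\eps$, with convergence along a sequence $\eps_k\to0$ being enough for your argument) is indeed standard; it follows from slicing $v$ in normal coordinates on a tubular neighborhood together with the coarea formula, and is essentially the construction of the $BV$ trace operator itself. One remark: when you fix the notation of the interface term you should use one-sided traces, $\int_{\partial\Omega_\eps}|v^{-}-\tilde\phi^{+}|\,y^{-n}\,dH_{n-1}$, and note that the identity holds for a.e.\ $\eps$ (so that $\partial\Omega_\eps$ avoids the jump set of $v$ up to $H_{n-1}$-null sets); otherwise everything else really is bookkeeping. Compared to the paper's proof, yours is more geometric and avoids needing the sharp constant in Gagliardo, at the price of invoking a $BV$-trace continuity lemma that is, if anything, a little heavier to cite cleanly; the two proofs are of comparable length once the cited ingredients are granted.
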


\begin{proof} Let $v \in BV(\Omega)$ and let $\epsilon >0$.
Gagliardo's Theorem (see Theorem 2.16 of \cite{G}) states that there exists a function
$w \in W^{1,1}(\Omega)$ with $w=v-\phi$ on $\partial \Omega$ and

\be\label{smallarea}\int_{\Omega} |\nabla w|y^{-n} \leq
(1+\epsilon)\int_{\partial \Omega}
|v-\phi|y^{-n} dH_{n-1},\ee

\be\label{volume}n|H|\int_\Omega |w|y^{-(n+1)} \leq \epsilon \int_{\partial
\Omega}
|v-\phi|y^{-n} dH_{n-1}.\ee
\

\noindent The function $u=v+w$ is in
$BV(\Omega)$ and $u=\phi$ on $\partial \Omega$. Moreover, by
\eqref{smallarea}

\begin{align*}
\int_{\Omega} \sqrt{1+|\nabla u|^2}y^{-n} &\leq
\int_{\Omega} \sqrt{1+|\nabla v|^2}y^{-n} + \int_{\Omega}
|\nabla w|y^{-n}\\ &\leq \int_{\Omega} \sqrt{1+|\nabla
v|^2}y^{-n}  + (1+\epsilon)\int_{\partial \Omega}
|v-\phi|y^{-n} dH_{n-1}.\end{align*}

\

\noindent Thus, by \eqref{volume}

\begin{align*}\mathcal{I}(u) \leq \mathcal{I}(v) + (1+2\epsilon)
\int_{\partial \Omega}
|v-\phi|y^{-n} dH_{n-1}.\end{align*}

\medskip

\noindent  As $\epsilon$ tends to zero,
taking the infimum over all $v \in BV(\Omega)$ we obtain

\begin{align*}
&\inf\{\mathcal{I}(v) : v \in BV(\Omega), v=\phi \ \mbox{on} \
\partial \Omega\}\leq \\ &\quad \inf\{\mathcal{I}(v) + \int_{\partial
\Omega} |v-\phi|y^{-n} dH_{n-1}; v \in BV(\Omega)\},\end{align*}

\

\noindent which
suffices as the opposite inequality is trivial.
\end{proof}

Proposition \ref{modifieddirichlet} suggests the introduction of
the modified energy functional

$$\mathcal{I}_\Omega^\phi(v) = \mathcal{I}(v) + \int_{\partial \Omega} |v -\phi|y^{-n} dH_{n-1}.$$

\

\noindent Again the dependence on $\Omega$ will be made explicit only when strictly necessary.

\

A compactness argument allows us to conclude that the minimization
problem for $\mathcal{I}^\phi(\cdot)$ is always solvable in
the appropriate class of functions. Precisely we have the
following Theorem. 



\begin{thm}\label{existmin} Assume $\partial \Omega$ is Lipschitz continuous and let $\phi \in
L^{\infty}(\partial\Omega).$  Then, $\mathcal{I}^\phi(\cdot)$
attains its minimum $u $ in $BV(\Omega)$. Moreover $u \in
L^{\infty}(\Omega)$ and $\|u\|_{L^\infty} \leq M$ for some $M = M(\|\phi\|_\infty).$
\end{thm}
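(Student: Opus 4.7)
The plan is the direct method in $BV(\Omega)$, with coercivity provided by a weighted Poincaré inequality coming from an explicit calibration, and the $L^\infty$ bound obtained by comparison with the horospherical barriers of Remark~\ref{horosphere}. First I would observe that $\gamma_0 := -\nabla y$ is a tangent vector field on $\mathbb{S}^n$ with $|\gamma_0|_{\mathbb{S}^n}=\sqrt{1-y^2}\leq 1$ and, using $\Delta_{\mathbb{S}^n} y = -ny$, satisfies $\textrm{div}_z(y^{-n}\gamma_0) = ny^{-(n+1)}$. Pairing this identity against $v \in BV(\Omega)$ and invoking the supremum definition of $A(v)$ yields the weighted Poincaré inequality
$$
n\int_\Omega |v|\,y^{-(n+1)}\,dz \;\leq\; A(v) + \int_{\partial\Omega}|v|\,y^{-n}\,dH_{n-1}.
$$
Combined with $|H|<1$ and the triangle inequality applied to $|v-\phi|$, this produces the coercivity estimate $\mathcal{I}^\phi(v) \geq (1-|H|)\bigl(A(v) + \int_{\partial\Omega}|v-\phi|\,y^{-n}\,dH_{n-1}\bigr) - C(\|\phi\|_{L^\infty})$.

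Given coercivity, existence is routine. For a minimizing sequence $\{v_k\}$, the coercivity bounds $A(v_k)$ and the boundary penalty; reusing the Poincaré inequality bounds $\int_\Omega|v_k|\,y^{-(n+1)}$, and since $y\leq 1$ on $\mathbb{S}^n_+$ this controls $\|v_k\|_{BV(\Omega)}$. $BV$-compactness on the Lipschitz domain $\Omega$ produces a subsequence $v_{k_j}\to u$ in $L^1(\Omega)$. Each piece of $\mathcal{I}^\phi$ is $L^1$-lower semicontinuous: $A(\cdot)$ by its supremum definition, $V(\cdot)$ by continuity, and the boundary penalty by the $BV$-trace theorem (cf.\ Giusti, Chapter~2). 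Hence $u$ attains the infimum.

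For the $L^\infty$ bound I would use the supersolution furnished by Remark~\ref{horosphere}. Solving $|e^{v}z-(0,-HR)|^2=R^2$ gives the explicit radial graph
$$V_R(z) = \log R + \log\!\bigl(-yH + \sqrt{1-H^2(1-y^2)}\bigr),$$
which has constant hyperbolic mean curvature $H$, is uniformly bounded on $\overline{\mathbb{S}^n_+}$, and for $R$ large satisfies $V_R \geq \|\phi\|_{L^\infty}$ on $\partial\Omega$. Comparing $u$ with $\min(u,V_R)$ via the $BV$ chain rule, and using that $V_R$ is a classical solution of the Euler-Lagrange equation, one verifies $\mathcal{I}^\phi(\min(u,V_R))\leq \mathcal{I}^\phi(u)$ with strict inequality unless $|\{u>V_R\}|=0$; minimality then forces $u \leq V_R$ almost everywhere. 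The symmetric cap centered at $(0,HR)$ gives the matching lower bound, and together these yield $\|u\|_{L^\infty}\leq M(\|\phi\|_{L^\infty})$. The main technical obstacle is this last comparison: the cross terms produced by cutting $u$ along $\{u = V_R\}$ must be controlled using the exact solvability of the Euler-Lagrange equation by $V_R$, and it is here (as well as in the very construction of $V_R$ as a bounded radial graph) that the hypothesis $|H|<1$ plays an essential role.
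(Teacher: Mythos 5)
Your proposal is correct in outline but takes a genuinely different route from the paper, and the difference is worth noting. The paper never needs a Poincar\'e inequality or separate coercivity estimate: it extends the problem to a spherical cap $S_\delta\supset\supset\overline\Omega$ (gluing the competitor to an extension of $\phi$, so the penalty term becomes a jump in the area integrand), constructs the same horospherical sub/supersolutions $\underline\varphi,\overline\varphi$ as your $V_R$, and then truncates the \emph{minimizing sequence} $u_j$ by $\underline u_j = \max\{\underline\varphi,\min\{u_j,\overline\varphi\}\}$. Because $\underline\varphi,\overline\varphi$ solve the Euler--Lagrange equation with $\inf\overline\varphi>\|\phi\|_\infty>-\sup\underline\varphi$, the truncated sequence still has nonincreasing energy; being uniformly bounded in $L^\infty$ and having bounded energy, it is automatically bounded in $BV(S_\delta)$, so one extracts the limit directly. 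This simultaneously gives finiteness of the infimum, $BV$ compactness, and the explicit $L^\infty$ bound with $M$ determined by the barriers --- no calibration needed. Your approach instead derives coercivity from the nice identity $\textrm{div}_z(y^{-n}\nabla y)=ny^{-(n+1)}$ (which is correct: $\Delta_{\mathbb S^n}y=-ny$ and $y^2+|\nabla y|^2=1$), extracts a $BV$-bounded minimizer, and only then truncates. That buys a cleaner conceptual separation of ``coercivity'' from ``$L^\infty$ bound'', at the cost of a more delicate post-hoc comparison.

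Two points where your sketch should be firmed up. First, the claim that $\mathcal I^\phi(\min(u,V_R))\le\mathcal I^\phi(u)$ ``with strict inequality unless $|\{u>V_R\}|=0$'' is not quite right as stated: the convexity inequality in the integration-by-parts step is not strict where $\nabla u=\nabla V_R$, so you only get the non-strict inequality. You should conclude as the paper implicitly does, by invoking the uniqueness of minimizers up to an additive constant (Remark~\ref{unique}) together with the fact that $\min(u,V_R)$ and $u$ share the same trace (because $V_R\geq\phi$ on $\partial\Omega$), forcing $\min(u,V_R)=u$. Second, the integration by parts over $\{u>V_R\}$ produces a boundary integral on $\partial\{u>V_R\}\cap\partial\Omega$ whose sign is not automatic when you work directly on $\Omega$ with the penalized functional; the paper sidesteps this by working in $S_\delta$ where competitors equal $\phi$ off $\overline\Omega$ and $\overline\varphi>\|\phi\|_\infty$ ensures $\{u_j>\overline\varphi\}\subset\subset S_\delta$. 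Working on $\Omega$ you must check that the decrease in the penalty term $\int_{\partial\Omega}|v-\phi|y^{-n}$ (which does go the right way since $V_R\ge\phi$ there) actually dominates the integration-by-parts boundary term, or else mimic the paper's extension trick. Neither point is fatal, but both need to be addressed explicitly.
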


\begin{proof}Let $S_\delta:= \{y > \delta\} \cap \mathbb{S}^n_{+}$ contain $\overline{\Omega}$
and let us extend $\phi$ to a $W^{1,1}$ function in
$S_\delta \setminus \overline{\Omega}$ that we will
still denote by $\phi.$ Let $v \in BV(\Omega)$ and define

$$v_\phi = \left\{%
\begin{array}{ll}
    v(z), & \hbox{$z\in \Omega;$} \\
    \phi, & \hbox{$z \in S_\delta \setminus \Omega.$} \\
\end{array}%
\right.    $$

\

\noindent Then, $v_\phi \in BV(S_\delta)$ and by the
trace formula

\begin{align*}
\int_{S_\delta}\sqrt{1+|\nabla v_\phi|^2}y^{-n} &= \int_{\Omega}\sqrt{1+|\nabla v|^2}y^{-n} +
\int_{S_\delta \setminus \Omega}\sqrt{1+|\nabla \phi|^2}y^{-n}\\ &\quad + \int_{\partial
\Omega}|v-\phi|y^{-n}dH_{n-1}.\end{align*}

\

\noindent Therefore,

$$\mathcal{I}_{S_\delta}(v_\phi)=\mathcal{I}^\phi_\Omega(v )+ C(\phi),$$

\

\noindent where
$C(\phi)$ is a constant independent of $v$. Hence in order to
minimize $\mathcal{I}^\phi_\Omega(\cdot)$ among all $BV(\Omega)$
functions, it suffices to minimize
$\mathcal{I}_{S_\delta}(\cdot)$ among all functions $u
\in BV(S_\delta)$, coinciding with $\phi$ in
$S_\delta \setminus \overline{\Omega}.$

Let $\overline{\varphi}$ and $\underline{\varphi}$ be smooth
solutions to the equation

\begin{equation}\label{x}
\textrm{div}_{\mathbb{S}^n}(\frac{y^{-n}\nabla
v}{\sqrt{1+|\nabla v|^2}}) = nHy^{-(n+1)}, \quad \text{in}
\quad S_{\delta}
\end{equation}

\

\noindent such that

\begin{equation}\label{over}\inf_{S_{\delta}}\overline{\varphi} >
\|\phi\|_{L^{\infty}(S_{\delta})},\end{equation}

\noindent and

\begin{equation}\label{under}\sup_{S_{\delta}} \underline{\varphi}
< -\|\phi\|_{L^{\infty}(S_{\delta})}.\end{equation}

\

\noindent The existence of $\overline{\varphi}$ and
$\underline{\varphi}$ follows from Remark \ref{horosphere} by
choosing $a^{\prime}=0$ for a suitable choice of $R$. Explicitly,

\begin{align*}&{\underline{\varphi}} =
-\|\phi\|_{L^{\infty}(S_{\delta})}+\log{(\sqrt{H^2 y^2
+(1-H^2)}-Hy})~,\\
& \ \\ &\overline{\varphi} =
\underline{\varphi}+2\|\phi\|_{L^{\infty}(S_{\delta})}-\log{(1-H)}~.\end{align*}

\medskip

 Now, let $u_j \in
BV(S_\delta)$ be a minimizing sequence, that is

$$\inf\{\mathcal{I}_{S_\delta}(u) : u\in BV(S_\delta), u=\phi \ \textrm{in} \
S_\delta \setminus \overline{\Omega}\}=
\lim_{j}\mathcal{I}_{S_\delta}(u_j)=I.$$

\

\noindent Let us
approximate the $u_j$'s with smooth functions which we still
denote by $u_j$'s. Set

$$\overline{u}_j = \min\{u_j,\overline{\varphi}\}$$ and compute

\begin{align}\label{compare}
\mathcal{I}_{S_\delta}(u_j) &= \mathcal{I}_{S_{\delta} \cap
\{u_j<\overline{\varphi}\}}(u_j)+\mathcal{I}_{S_{\delta} \cap
\{u_j>\overline{\varphi}\}}(u_j)\\\nonumber &\\\nonumber
&=\mathcal{I}_{S_{\delta}}(\overline{u}_j)-\mathcal{I}_{S_{\delta}
\cap \{u_j>\overline{\varphi}\}}(\overline{u}_j)+
\mathcal{I}_{S_{\delta} \cap
\{u_j>\overline{\varphi}\}}(u_j)\\\nonumber &\\\nonumber
&=\mathcal{I}_{S_{\delta}}(\overline{u}_j)+\int_{S_{\delta} \cap
\{u_j>\overline{\varphi}\}}\left[ y^{-n}\left(\sqrt{1+|\nabla
u_j|^2}-\sqrt{1+|\nabla \overline{u}_j|^2}\right)\right.& \\
\nonumber & \ \\ \nonumber &\left. \quad \quad \quad \quad \quad
\quad \quad \quad \quad \quad \quad \quad  \ \ \ \  \ \ \ \ \ \ \
\ \ \ \ \ \ \ \ \  \ + n H(u_j-
\overline{u}_j)y^{-(n+1)}\right]dz\\\nonumber
\\\label{intbyparts} &\geq \mathcal{I}_{S_{\delta}}(\overline{u}_j)+ \quad \int_{S_{\delta}
\cap \{u_j>\overline{\varphi}\}} \left(\frac{y^{-n}\nabla
\overline{u}_j}{ \sqrt{1+|\nabla \overline{u}_j|^2}}\nabla(u_j -
\overline{u}_j)\right.& \\
\nonumber & \ \\ \nonumber &\left. \quad \quad \quad \quad \quad
\quad \quad \quad \quad \quad \quad \quad  \ \ \ \  \ \ \ \ \ \ \
\ \ \ \ \ \ \ \ \  \ + nH(u_j -
\overline{u}_j)y^{-(n+1)}\right)dz.
\end{align}

\medskip

\noindent After integration by parts the integral in
\eqref{intbyparts} is identically zero in view of the fact that
$\overline{\varphi}$ satisfies \eqref{x}-\eqref{over}. Hence,

\begin{equation}\label{1}\mathcal{I}_{S_\delta}(u_j)  \geq
\mathcal{I}_{S_\delta}(\overline{u}_j).
\end{equation}

\

\noindent Analogously, set

$$\underline{u}_j=\max\{\underline{\varphi},
\overline{u}_j\}$$

\

\noindent and note that

$$\underline{\varphi} \leq \underline{u}_j \leq \overline{\varphi}.$$

\ Then,

\begin{eqnarray*}\mathcal{I}_{S_\delta}(\underline{u}_j) &=&
\mathcal{I}_{S_{\delta} \cap \{\overline{u}_j<\underline{\varphi}\}}(\underline{u}_j)+
\mathcal{I}_{S_{\delta} \cap \{\overline{u_j}>\underline{\varphi}\}}(\underline{u}_j) \\
&& \\
&=&
\mathcal{I}_{S_{\delta}}(\overline{u}_j)-\mathcal{I}_{S_{\delta}
\cap \{\overline{u}_j<\underline{\varphi}\}}(\overline{u}_j)+
\mathcal{I}_{S_{\delta} \cap \{\overline{u}_j<\underline{\varphi}\}}(\underline{\varphi}) \\
&& \\
&=&\mathcal{I}_{S_{\delta}}(\overline{u}_j)+\int_{S_{\delta} \cap
\{\overline{u}_j<\underline{\varphi}\}}\left[
y^{-n}\left(\sqrt{1+|\nabla \underline{\varphi}|^2}
-\sqrt{1+|\nabla \overline{u}_j|^2}\right)\right.\\ & \ & \  \\ &
\ & \left. \ \ \quad \quad \ \ \ \ \ \ \ \ \ \ \ \ \ \ \quad \quad
\ \ \ \ \ \ \quad \ \ \ \ \ \ \ \ \ \ \ \ \ \ \ \ +
 nH(\underline{\varphi}-u_j)y^{-(n+1)}\right]dz  \\
&&\\
 &\leq& \mathcal{I}_{S_{\delta}}(\overline{u}_j)+ \int_{S_{\delta} \cap \{\overline{u}_j<\underline{\varphi}\}}
\left(\frac{y^{-n}\nabla \underline{\varphi}}{ \sqrt{1+|\nabla
\underline{\varphi}|^2}}\nabla(\underline{\varphi}- {u}_j)
\right.\\ & \ & \  \\ & \ & \left. \ \ \quad \quad \ \ \ \ \ \ \ \
\ \ \ \ \ \ \quad \quad \ \ \ \ \ \ \quad \ \ \ \ \ \ \ \ \ \ \ \
\ \ \ \ + nH(\underline{\varphi}-u_j)y^{-(n+1)}\right)dz.
\end{eqnarray*}

\medskip

 \noindent Again since $\underline{\varphi}$ satisfies
\eqref{x}-\eqref{under}, the last term  vanishes. Hence,

\begin{equation}\label{2}\mathcal{I}_{S_\delta}(\overline{u}_j)
\geq \mathcal{I}_{S_\delta}(\underline{u}_j).
\end{equation}

\

Combining \eqref{1} and \eqref{2} we obtain that

$$\lim_j
\mathcal{I}_{S_\delta}(u_j) \geq K,$$

\

\noindent for some constant
$K$, hence $I$ is finite. Moreover, the $\underline{u}_j$'s are
are uniformly bounded in $BV$
(since  $\mathcal{I}_{S_\delta}(\underline{u}_j)\leq
 \mathcal{I}_{S_\delta}(u_j) \leq C,$ as the $u_j$'s are a minimizing sequence)
and we can extract a subsequence which converges in
$L^1(S_\delta)$ to some function $u \in BV(S_\delta)$. Furthermore
$u \in L^{\infty}(\Omega)$ and $u=\phi$ in $S_\delta \setminus
\overline{\Omega}$. Then by the lower semicontinuity of our
functional we find that $u$ is the required minimizer.
\end{proof}

We now collect a few more facts about minimizers, which will be used in
the next sections.

\begin{rem}\label{unique} From the strict convexity of our functional, in
particular

$$\frac{\mathcal{I}^\phi(v_1) + \mathcal{I}^\phi(v_2)}{2}
\geq \mathcal{I}^\phi\left(\frac{v_1+v_2}{2}\right),$$

\

\noindent we obtain that if $v_1,v_2$
are two minima of $\mathcal{I}^\phi(\cdot)$, then $v_1=v_2+$const.
Moreover on $\partial \Omega$ the traces of $v_1$ and $v_2$ satisfy $(v_1-\phi)(v_2-\phi)>0$. Finally,
if $v_1$ and $v_2$ have the same trace $\phi$ on $\partial
\Omega$ then $v_1=v_2.$ \end{rem}

\begin{cor}\label{maxprinc}Let $v$ minimize $\mathcal{I}^\phi(\cdot)$,  and  $\phi \in C(\partial \Omega).$
Assume that $\overline{\varphi}$ (resp. $\underline{\varphi}$ ) is a smooth
supersolution (resp. subsolution) to equation $\eqref{dirichlet1}$, with
$\overline{\varphi} \geq \phi$ (resp. $\underline{\varphi} \leq \phi$) on $\partial \Omega.$
Then $\overline{\varphi} \geq v$ (resp. $\underline{\varphi} \leq v$) in
$\Omega$.\end{cor}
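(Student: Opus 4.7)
The plan is to prove $v \leq \overline\varphi$ in $\Omega$; the subsolution case is symmetric. The strategy reproduces, on the minimizer itself, the truncation computation that was used on the minimizing sequence in the proof of Theorem~\ref{existmin}.

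First I would introduce the competitor $\bar v = \min\{v,\overline\varphi\} \in BV(\Omega)\cap L^\infty(\Omega)$ and aim to show $\mathcal{I}^\phi(\bar v) \leq \mathcal{I}^\phi(v)$. Setting $A = \{v > \overline\varphi\}$, convexity of $p \mapsto \sqrt{1+|p|^2}$ gives
\[
\sqrt{1+|\nabla v|^2} - \sqrt{1+|\nabla \overline\varphi|^2} \geq \frac{\nabla\overline\varphi \cdot \nabla(v-\overline\varphi)}{\sqrt{1+|\nabla\overline\varphi|^2}} \quad \text{on } A.
\]
After multiplying by $y^{-n}$, adding $nH(v-\overline\varphi)y^{-(n+1)}$, and integrating over $A$ (with $v$ replaced by a smooth approximation), an integration by parts converts the right-hand side into $\int_A (v-\overline\varphi)\bigl[nHy^{-(n+1)} - \textrm{div}(y^{-n}\nabla\overline\varphi/\sqrt{1+|\nabla\overline\varphi|^2})\bigr]\,dz$ plus a boundary flux on $\partial\Omega$. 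The internal portion $\partial A \cap \Omega$ contributes nothing since $v = \overline\varphi$ there, and the supersolution inequality makes the interior integral non-negative.

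The leftover boundary flux is
\[
\int_{\partial\Omega \cap \{v^{*} > \overline\varphi\}} (v^{*} - \overline\varphi)\,\frac{y^{-n}\nabla\overline\varphi \cdot \nu}{\sqrt{1+|\nabla\overline\varphi|^2}}\,dH_{n-1},
\]
whose absolute value is strictly bounded above by $\int_{\{v^{*}>\overline\varphi\}}(v^{*}-\overline\varphi)y^{-n}\,dH_{n-1}$ thanks to the strict pointwise bound $|\nabla\overline\varphi\cdot\nu|/\sqrt{1+|\nabla\overline\varphi|^2} < 1$. On the other hand, the hypothesis $\overline\varphi \geq \phi$ on $\partial\Omega$ implies that on the trace set $\{v^{*}>\overline\varphi\}$ both $v^{*} - \phi$ and $\overline\varphi - \phi$ are non-negative, so the change in the boundary penalty reduces to exactly $\int_{\{v^{*}>\overline\varphi\}}(v^{*}-\overline\varphi)y^{-n}\,dH_{n-1}$, which more than offsets the flux. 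Combining everything yields $\mathcal{I}^\phi(v) \geq \mathcal{I}^\phi(\bar v)$.

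Minimality of $v$ then forces equality, so $\bar v$ is itself a minimizer, and Remark~\ref{unique} yields $\bar v = v + c$ for some constant $c \leq 0$. The case $c = 0$ gives $\bar v = v$ directly, hence $v \leq \overline\varphi$. Otherwise $v \equiv \overline\varphi + |c|$ a.e.\ in $\Omega$; then $v$ satisfies \eqref{dirichlet1} in the interior, so $\overline\varphi$ must in fact be an exact solution, and a direct comparison of $\mathcal{I}^\phi(\overline\varphi+|c|)$ with $\mathcal{I}^\phi(\overline\varphi)$, using $nH\int_\Omega y^{-(n+1)}\,dz = \int_{\partial\Omega}(y^{-n}\nabla\overline\varphi\cdot\nu)/\sqrt{1+|\nabla\overline\varphi|^2}\,dH_{n-1}$ (divergence theorem) together with the same strict bound, yields $\mathcal{I}^\phi(v) > \mathcal{I}^\phi(\overline\varphi)$, a contradiction. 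The main obstacle throughout is the careful bookkeeping of the boundary flux produced by the integration by parts and its cancellation against the change in the boundary penalty term---this is exactly where the assumption $\overline\varphi \geq \phi$ on $\partial\Omega$ is essential.
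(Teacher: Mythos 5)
Your argument is exactly what the paper has in mind: it is the truncation-and-convexity computation from the proof of Theorem~\ref{existmin} (formula~\eqref{compare}) applied to the competitor $\min\{v,\overline\varphi\}$, closed off with the uniqueness in Remark~\ref{unique}. You correctly supply the two details the paper's one-line justification leaves implicit — namely that the truncation set may now reach $\partial\Omega$, so the boundary flux from the integration by parts must be absorbed by the change in the penalty term $\int_{\partial\Omega}|v-\phi|y^{-n}$ (which is where $\overline\varphi\geq\phi$ and $|\nabla\overline\varphi\cdot\nu|/W<1$ enter), and that the residual rigid case $\bar v=v+c$ with $c<0$ must be excluded by a direct energy comparison.
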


\medskip

The corollary above follows by the same argument as in the proof of Theorem
\ref{existmin} (in particular see formula \eqref{compare}), together with
Remark \ref{unique}.

\begin{lem}\label{uniqtrace} Let $v_i \in BV(\Omega)$ minimize
$\mathcal{I}^{\phi_i}, \phi_i \in L^\infty(\partial \Omega)$, $v_i=\phi_i$ on $\partial \Omega$ (in the trace sense) $i=1,2$. Assume $\phi_1 \geq \phi_2$ on $\partial \Omega$. Then $v_1 \geq v_2$ in $\Omega$.
\end{lem}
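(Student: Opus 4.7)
The plan is to use the classical max/min comparison argument for obstacle-free minimizers of convex functionals of this type. Set
\[
w_1 := \max(v_1,v_2), \qquad w_2 := \min(v_1,v_2),
\]
both of which lie in $BV(\Omega)$. Since the trace operator on $BV$ commutes with the lattice operations and $\mathrm{tr}(v_1)=\phi_1\geq\phi_2=\mathrm{tr}(v_2)$, we have $\mathrm{tr}(w_1)=\phi_1$ and $\mathrm{tr}(w_2)=\phi_2$, so the boundary penalty terms in $\mathcal{I}^{\phi_1}(w_1)$ and $\mathcal{I}^{\phi_2}(w_2)$ both vanish, reducing them to $\mathcal{I}(w_1)$ and $\mathcal{I}(w_2)$.

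The key identity is
\[
\mathcal{I}^{\phi_1}(w_1)+\mathcal{I}^{\phi_2}(w_2)\;\leq\;\mathcal{I}^{\phi_1}(v_1)+\mathcal{I}^{\phi_2}(v_2).
\]
The volume term behaves well: from $w_1+w_2 = v_1+v_2$ pointwise we get $V(w_1)+V(w_2)=V(v_1)+V(v_2)$. For the area term one uses submodularity of the weighted perimeter. The level sets of $w_1,w_2$ are precisely the unions and intersections of those of $v_1,v_2$, and for the weighted perimeter with weight $y^{-n}$ on $\s^n$ one has $P(E\cup F)+P(E\cap F)\leq P(E)+P(F)$; integrating this in $t$ via the coarea representation $A(v)=\int_{\mathbb{R}}P(\{v>t\})\,dt$ yields $A(w_1)+A(w_2)\leq A(v_1)+A(v_2)$. (Equivalently, on smooth functions one has the pointwise identity $\sqrt{1+|\nabla w_1|^2}+\sqrt{1+|\nabla w_2|^2}=\sqrt{1+|\nabla v_1|^2}+\sqrt{1+|\nabla v_2|^2}$, and the BV case follows by approximation.)

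To finish, combine this with the minimality inequalities $\mathcal{I}^{\phi_i}(v_i)\leq\mathcal{I}^{\phi_i}(w_i)$ for $i=1,2$: equality must hold in each, so $w_1$ itself is a minimizer of $\mathcal{I}^{\phi_1}$. Since $w_1$ and $v_1$ share the trace $\phi_1$ on $\partial\Omega$, Remark \ref{unique} forces $w_1=v_1$, i.e.\ $\max(v_1,v_2)=v_1$, which is exactly $v_1\geq v_2$ in $\Omega$.

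The only delicate step is the weighted-area submodularity $A(w_1)+A(w_2)\leq A(v_1)+A(v_2)$ in the $BV$ category; once this is in hand (and the trace-of-max/min observation is noted), the rest is bookkeeping plus the uniqueness statement of Remark \ref{unique}.
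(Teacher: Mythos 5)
Your proof is correct and follows essentially the same route as the paper's: form $\max(v_1,v_2)$ and $\min(v_1,v_2)$, establish $\mathcal{I}(v_1)+\mathcal{I}(v_2)\geq\mathcal{I}(v_{\max})+\mathcal{I}(v_{\min})$, observe that the trace hypothesis $\phi_1\geq\phi_2$ makes $v_{\max}$ (resp.\ $v_{\min}$) have trace $\phi_1$ (resp.\ $\phi_2$), and conclude from minimality and the uniqueness of Remark~\ref{unique}. The only cosmetic difference is that you justify the key area inequality by perimeter submodularity via coarea (or the pointwise a.e.\ identity), whereas the paper states the smooth case as obvious and passes to $BV$ by strict area approximation and lower semicontinuity --- same mechanism, phrased differently.
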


\begin{proof} Set
$$v_{max}= \max\{v_1,v_2\}, \quad
v_{min}=\min\{v_1,v_2\}.$$

\smallskip

\noindent Then,
\be\label{areaineq} \mathcal{I}(v_1) + \mathcal{I}(v_2) \geq
\mathcal{I}(v_{max}) + \mathcal{I}(v_{min}).\ee

\medskip

\noindent Indeed formula \eqref{areaineq} clearly holds in the
case when $v_1$ and $v_2$ are smooth. We can then approximate
$v_i$, $i=1,2$ with a sequence $\{v_i^m\}$ of smooth functions
such that $v_i^m \rightarrow v_i$ in $L^1$ and $A(v_i^m)
\rightarrow A(v_i).$ Then by the lower semicontinuity of our
functional we immediately get \eqref{areaineq} for $BV$ functions.

Moreover, since $\phi_1 \geq \phi_2$ on $\partial \Omega$, we have that
$v_{max}$ has the same
trace as $v_1$ while $v_{min}$ has the same trace as $v_2$ on $\partial
\Omega$. The desired
claim now follows by the
uniqueness of minimizers (Remark \ref{unique}).
\end{proof}

\smallskip

\begin{rem}\label{sol=min}It is straightforward to show that
smooth
solutions to the Dirichlet problem for the divergence equation
\eqref{dirichlet1} on
$\Omega$ and boundary data $\phi$, also
minimize the energy
integral $\mathcal{I}(\cdot)$
among all competitors equal to $\phi$ on $\partial \Omega$.
\end{rem}

\section{Regularity in low dimensions}

In this section we prove our main regularity result Theorem \ref{lowd}.
The
existence of local bounded minimizers is guaranteed by Theorem
\ref{existmin}.

We proceed to investigate the connection between non-parametric
and parametric surfaces of constant mean curvature in
hyperbolic space.

For any function $v$ over $\Omega$ we set

$$V:= \{x \in \mathbb{R}^{n+1}: x= e^wz, z\in \Omega, -\infty < w <
v(z)\}.$$

\

\noindent $V$ is the subgraph of the radial graph
defined by

$$X= e^{v(z)}z, \quad z \in \Omega.$$

\


\noindent Also, for any $T>0$, we define

\begin{align*}&C_T:=\{x \in \mathbb{R}^{n+1}: x= e^wz, z\in \Omega,
-T-1 < w < T+1\},\\
\ \\
&\underline{C}_{T}:=\{x \in \mathbb{R}^{n+1}: x= e^wz, z\in \Omega, -T -1 < w <
-T\},\\
\ \\
&\overline{C}_{T}:=\{x \in \mathbb{R}^{n+1}: x= e^wz, z\in \Omega, -T-1 < w <
T\}.\end{align*}




\

\noindent Let us denote by

$$\mathcal{E} := \{E \subseteq C : E \ \text{measurable}, \ \underline{C}_{T}\subseteq E \subseteq \overline{C}_{T} \}.$$


\medskip

\noindent Also, let us define the set functionals representing respectively the
perimeter and the volume in $C_T$ of a set $U$ in the hyperbolic
space $\mathbb{H}^{n+1}$:

$$\mathcal{P}_{C_T}(U) := \sup\{\int_{C_T\overline{}} \varphi_U\textrm{div}_x({g x_{n+1}^{-n}})dx : g \in C^{1}_0(C_T;\mathbb{R}^{n+1}),
|g|^2 \leq 1\}$$

$$\textit{Vol}(U) := \int_{U\cap C_T} x_{n+1}^{-(n+1)}dx.$$

\

\noindent Here we denote by $\varphi_U$ the characteristic
function of a set $U$. We will often drop the subscript $C_T$, whenever this generates no confusion.

Set,

$$\mathcal{F}(U) = \mathcal{P}(U) + nH \it{Vol}(U).$$

\

We wish to prove the following theorem.

\begin{thm}\label{subgraphmin} Let $v \in BV(\Omega) \cap L^\infty(\Omega)$ be a
local minimizer of $\mathcal{I}(\cdot)$ and let $T > \|v\|_{L^\infty}.$ Then $V_T:=V \cap C_T$ locally minimizes
$\mathcal{F}(\cdot)$ among all competitors in $\mathcal{E}.$
\end{thm}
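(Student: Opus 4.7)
The plan is to reduce parametric local minimality of $V_T$ for $\mathcal{F}(\cdot)$ to the non-parametric local minimality of $v$ for $\mathcal{I}(\cdot)$, via a subgraph rearrangement in the spirit of Miranda--Giusti. The first step is to identify the two functionals up to an additive constant. Passing to coordinates $(z,w)\in\Omega\times(-T-1,T+1)$ via $x=e^wz$, one has $dx=e^{(n+1)w}dw\,dz$ and thus $x_{n+1}^{-(n+1)}dx=y^{-(n+1)}dw\,dz$. Since $T>\|v\|_{L^\infty}$, the radial graph of $v$ lies in the interior of $C_T$, so $\partial^*V_T\cap C_T$ reduces to this graph. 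A direct computation then yields
\begin{equation*}
\mathcal{P}(V_T)=A_\Omega(v),\qquad \textit{Vol}(V_T)=V_\Omega(v)+(T+1)\int_\Omega y^{-(n+1)}\,dz,
\end{equation*}
so that $\mathcal{F}(V_T)=\mathcal{I}(v)+C_T$ with $C_T:=nH(T+1)\int_\Omega y^{-(n+1)}\,dz$ independent of $v$.

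Next, for a competitor $E\in\mathcal{E}$ whose symmetric difference with $V_T$ is compactly contained in $C_T$, I would introduce its subgraph rearrangement. For a.e.\ $z\in\Omega$ set $E_z:=\{w\in(-T-1,T+1):e^wz\in E\}$ and define
\begin{equation*}
v_E(z):=|E_z|-(T+1)\in[-T,T],\qquad E^*:=\{e^wz:-T-1<w<v_E(z)\}.
\end{equation*}
Then $E^*\in\mathcal{E}$ is a subgraph with the same fiber measures as $E$, and $v_E\in BV(\Omega)\cap L^\infty(\Omega)$ coincides with $v$ outside a compact subset of $\Omega$, hence is an admissible local competitor to $v$ for $\mathcal{I}$; the same identification as above then gives $\mathcal{F}(E^*)=\mathcal{I}(v_E)+C_T$. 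By Fubini applied to the measure $y^{-(n+1)}dw\,dz$ (whose weight depends only on $z$), $\textit{Vol}(E)=\int_\Omega y^{-(n+1)}|E_z|\,dz=\textit{Vol}(E^*)$. The crucial ingredient is the perimeter inequality
\begin{equation*}
\mathcal{P}(E^*)\leq \mathcal{P}(E),
\end{equation*}
a Steiner-type rearrangement estimate adapted to the warped-product hyperbolic metric $g=y^{-2}(ds_{\mathbb{S}^n}^2+dw^2)$ on $C_T$. The $g$-perimeter measure decomposes as $y^{-n}dA_{\tilde g}$ with $\tilde g$ the product metric, and the weight $y^{-n}$ depends only on the base variable $z$; this is the structural hypothesis under which the classical rearrangement argument of Giusti (cf.~\cite{G}, Ch.~14) applies. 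Concretely, one tests the $BV$ supremum defining $A_\Omega(v_E)$ against admissible vector fields on $C_T$ of the form $\xi(z,w)=y(z)(\tilde\gamma(z),\gamma_{n+1}(z))$, which satisfy $|\xi|_g\leq 1$, and combines integration by parts with a Fubini slicing in $w$ to bound $A_\Omega(v_E)$ by $\mathcal{P}(E)$.

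Assembling the ingredients, the local minimality of $v$ yields $\mathcal{I}(v)\leq \mathcal{I}(v_E)$, and therefore
\begin{equation*}
\mathcal{F}(V_T)=\mathcal{I}(v)+C_T\leq \mathcal{I}(v_E)+C_T=\mathcal{F}(E^*)\leq \mathcal{F}(E),
\end{equation*}
proving the claim. The main obstacle is the Steiner-type inequality $\mathcal{P}(E^*)\leq \mathcal{P}(E)$: although intuitively clear from the warped-product structure, it must be justified at the level of general $BV$ sets rather than smooth ones. The key structural fact making the classical argument work is that the hyperbolic perimeter weight $y^{-n}$ depends only on the spherical variable $z$ and not on the radial parameter $w$, so that the question reduces to a monotone rearrangement along the one-dimensional fibers of the projection $\pi:C_T\to\Omega$.
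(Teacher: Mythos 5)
Your proposal follows essentially the same route as the paper's proof (Propositions~3.3 and~3.5 combined with local minimality of $v$): identify $\mathcal{F}$ with $\mathcal{I}$ up to an additive constant, rearrange a competitor $E\in\mathcal{E}$ along the radial fibers $w\mapsto e^{w}z$, and show the rearrangement does not increase the functional. Your intermediate passage through $E^*$ and the split into $\mathcal{P}(E^*)\le\mathcal{P}(E)$ plus $\textit{Vol}(E^*)=\textit{Vol}(E)$ is a cosmetic reorganization of the same argument; the paper establishes the operative inequality $\mathcal{P}(E)\ge A_\Omega(u)$ directly by testing the perimeter supremum with vector fields of the form $\gamma(z)\eta(w)$, the cutoff $\eta$ in $w$ (needed for compact support in $C_T$) being the one detail your sketch leaves implicit.
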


We start with the following proposition.

\begin{prop}\label{subgr} Let $v \in BV(\Omega) \cap L^\infty(\Omega)$, and let $T > \|v\|_{L^\infty}.$ Then,

\begin{equation}\label{equality}\mathcal{F}(V_T) = \mathcal{I}(v) + k(T+1)\end{equation}

\

\noindent where $V_T:= V \cap C_T$ and
$k=\int_{\Omega} y^{-(n+1)}dz.$
\end{prop}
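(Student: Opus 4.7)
The plan is to compute the two ingredients of $\mathcal{F}(V_T) = \mathcal{P}_{C_T}(V_T) + nH\,\mathrm{Vol}(V_T)$ separately in the radial chart $x = e^{w}z$, where $z \in \mathbb{S}^n$, $w \in \mathbb{R}$ and $y = z_{n+1}$ so that $x_{n+1} = e^{w}y$. Because $T > \|v\|_{L^\infty}$, the set $V_T$ becomes exactly the truncated subgraph $\{(z,w) : z \in \Omega,\ -T-1 < w < v(z)\}$ in these coordinates. A short computation shows that the hyperbolic metric pulls back to the conformally flat product $y^{-2}(dw^2 + |dz|^2_{\mathbb{S}^n})$ on $\mathbb{R}\times\mathbb{S}^n$, so the hyperbolic volume form is $y^{-(n+1)}\,dw\,dz$ and the hyperbolic area form on a hypersurface is $y^{-n}$ times the flat area form in the product metric.

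The volume part becomes a one-line Fubini calculation: the Jacobian $e^{(n+1)w}$ of the radial map cancels exactly against $x_{n+1}^{-(n+1)} = (e^{w}y)^{-(n+1)}$, so $\mathrm{Vol}(V_T)$ splits as $V_\Omega(v) + (T+1)k$.

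For the perimeter, I would first decompose the topological boundary of $V_T$ into the graph piece $\{w = v(z)\}$, the bottom cap $\{w = -T-1\}$, and the lateral cone $\{z \in \partial\Omega\}$. The latter two pieces lie on $\partial C_T$, hence contribute nothing to $\mathcal{P}_{C_T}(V_T)$ since admissible test fields $g \in C^1_0(C_T;\mathbb{R}^{n+1})$ vanish there. Only the graph survives, and the remaining task is to identify its contribution with $A_\Omega(v)$. For smooth $v$ this is a direct area computation using the conformal description of the metric. For general $v \in BV(\Omega)$ it is a Miranda-type identification of the perimeter of a BV subgraph with the weighted area functional of $v$, which I would prove by setting up a bijection between admissible $g$ (decomposed into its radial and spherical components in the $(w,z)$ chart) and admissible pairs $(\tilde\gamma,\gamma_{n+1})$ in the sup-definition of $A_\Omega(v)$, with the Euclidean norm constraint $|g|^2 \leq 1$ translating through the conformal factor to $|\tilde\gamma|^2_{\mathbb{S}^n} + |\gamma_{n+1}|^2 \leq 1$.

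The main obstacle is precisely this BV step: one has to track carefully how the conformal factor $y^{-1}$ distributes between the test field and the weight $x_{n+1}^{-n}$, and verify that no perimeter mass escapes onto $\partial C_T$. Once $\mathcal{P}_{C_T}(V_T) = A_\Omega(v)$ and the volume formula are in hand, adding the two contributions and recalling $\mathcal{I}(v) = A_\Omega(v) + nHV_\Omega(v)$ yields \eqref{equality}.
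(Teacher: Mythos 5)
Your overall plan is the same as the paper's: pass to the radial chart $x = e^{w}z$, compute the volume by Fubini, and identify $\mathcal{P}_{C_T}(V_T)$ with the weighted area $A_\Omega(v)$; the conformal computation $x_{n+1}^{-2}\,ds_E^2 = y^{-2}(dw^2 + |dz|^2_{\mathbb{S}^n})$ and the Jacobian cancellation for the volume are both correct.

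The gap is in how you propose to prove $\mathcal{P}_{C_T}(V_T) = A_\Omega(v)$. You describe ``setting up a bijection between admissible $g$ \dots\ and admissible pairs $(\tilde\gamma,\gamma_{n+1})$''. No such bijection exists: a test field $g \in C^1_0(C_T;\mathbb{R}^{n+1})$ is an arbitrary field depending on both $z$ and $w$, while a test pair $\gamma$ in the sup-definition of $A_\Omega(v)$ depends on $z$ alone, so the former class is genuinely larger. What one can do is restrict to product test fields $g(z,w) = \gamma(z)\eta(w)$, with $\eta$ a cutoff equal to $1$ on $[-T,\sup v]$: the map $\gamma \mapsto \gamma\eta$ is an \emph{injection} of the $A_\Omega$-test class into the perimeter-test class, and evaluating gives $\mathcal{P}(V_T) \geq A(v)$. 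Running this in reverse fails, because the $w$-integral $\int_{-T-1}^{v(z)}\textrm{div}_z[y^{-n}\tilde g(z,w)]\,dw$ cannot be collapsed to a single $z$-dependent field without losing the pointwise constraint $|g|\leq 1$.

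The reverse inequality $\mathcal{P}(V_T) \leq A(v)$ therefore needs a separate argument, and this is the substance of the ``Miranda-type identification'' you invoke: the standard proof is \emph{not} a bijection of test objects but an approximation. One checks equality directly for $v \in C^1$, then for general $v \in BV$ takes $v_j \in C^\infty$ with $v_j \to v$ in $L^1$ and $A(v_j) \to A(v)$, notes $V_{j,T} \to V_T$ in $L^1(C_T)$, and concludes $\mathcal{P}(V_T) \leq \liminf_j \mathcal{P}(V_{j,T}) = \lim_j A(v_j) = A(v)$ by lower semicontinuity of perimeter. If you replace the bijection step with this two-sided argument (injection for $\geq$, approximation plus semicontinuity for $\leq$), your proposal coincides with the paper's proof. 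Your boundary-decomposition remark (graph, bottom cap, lateral cone, with only the graph contributing to $\mathcal{P}_{C_T}$) is a correct heuristic but is not literally used; the paper never decomposes $\partial V_T$, it works directly with the sup definition.
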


\begin{proof}
We start by showing that

$$\mathcal{P}(V_T)\geq A(v).$$

\

\noindent By
definition, for any $g$ compactly supported in $C_T$ satisfying
$|g|^2 \leq 1$,we have

\begin{align}
\mathcal{P}(V_T) &\geq \int_{V_T} \textrm{div}_x[x_{n+1}^{-n}g(x)]dx=\\
&= \int_{\Omega} \int_{-T-1}^{v(z)} \textrm{div}_{z,w}[y^{-n}g(z,w)]dwdz\end{align}

\

\noindent where in the second line we performed the change of variable $x =
e^w z$. Also we denote by $\textrm{div}_{z,w}$ the divergence on
the manifold $\mathbb{S}^n \times \mathbb{R}$ with the standard
product metric. Notice that $g(z,w) = (\tilde{g}(z,w),
g_{n+1}(z,w))$ satisfies $|\tilde{g}|_{\mathbb{S}^n}^2 +
|g_{n+1}|^2 \leq 1$. Since $g$ is arbitrary, we can choose

$$(\tilde{g}(z,w), g_{n+1}(z,w))= (\tilde{\gamma}(z),
\gamma_{n+1}(z))\eta(w),$$

\

\noindent where $\gamma = (\tilde{\gamma},
\gamma_{n+1})$ is a vector field compactly supported on $\Omega$
such that $|\tilde{\gamma}|^2_{\mathbb{S}^n} + |\gamma_{n+1}|^2
\leq 1$, while $\eta$ is compactly supported in $[-T-1,
\sup_{\Omega}v+1]$ and such that $\eta \equiv 1$ on $[-T,
\sup_{\Omega}v]$ and $|\eta| \leq 1.$ Thus,

\begin{align*}
\mathcal{P}(V_ T) &\geq \int_{\Omega} \int_{-T-1}^{v(z)}
\textrm{div}_{z,w}[\gamma\eta y^{-n}]dwdz=\\
&\int_{\Omega}\int_{-T-1}^{v(z)}\textrm{div}_z[\tilde{\gamma}y^{-n}]\eta(w)dwdz +
\int_{\Omega}\int_{-T-1}^{v(z)} \gamma_{n+1}(z)y^{-n}\eta'(w)dwdz.\end{align*}

\

\noindent From our choice of $\eta$ we have that

$$\int_{-T-1}^{v(z)} \eta'(w)dw =1$$
and

$$\int_{-T-1}^{v(z)}\eta(w)dw = v(z) - c$$ with $c$ constant.

Thus,

$$\mathcal{P}(V_T) \geq \int_{\Omega} v(z)\textrm{div}_z[\tilde{\gamma} y^{-n}]dz + \int_{\Omega} \gamma_{n+1}y^{-n} dz,$$

\

\noindent and the desired statement follows by taking the sup over all
$\gamma=(\tilde{\gamma}, \gamma_{n+1})$ of length smaller than 1,
compactly supported in $\Omega$.

The opposite inequality follows by a standard limiting argument.
In the case when $v \in C^1(\Omega)$ then clearly

$$\mathcal{P}(V_T) = A(v).$$

\

\noindent Now let $v_j \in C^{\infty}(\Omega), v_j \rightarrow v$
in $L^{1}(\Omega)$ and $A(v_j) \rightarrow
A(v).$ Then $V_{j,T} \rightarrow V_{T}$ in $L^1(C)$ and
therefore by the lower semicontinuity of the perimeter functional
we get

$$\mathcal{P}(V_T) \leq \liminf_{j\rightarrow \infty} \mathcal{P}(V_{j,T})=
\lim_{j\rightarrow \infty} A(v_j) = A(v).$$

\

Finally, we compute

$$\textit{Vol}(V_T)= \int_{V_T} x_{n+1}^{-(n+1)}dx =
\int_{\Omega}\int_{-T-1}^{v(z)} y^{-(n+1)}dwdz=
V(v) + k(T+1),$$

\

\noindent which concludes the proof.
\end{proof}

Let $E \in \mathcal{E}$ and denote
by $\tilde{E}$ the image of $E$ under the coordinate
transformation $x=e^wz, z\in \Omega, -T -1< w < T
+1$. Set

\begin{equation}\label{rearrf}u(z) = \int_{-T}^{T} \varphi_{\tilde{E}}(z,w) dw  - T , \quad z\in \Omega.\ee

\

\noindent The subgraph in $C_T$ of the radial surface $X=e^{u(z)}
z, z\in \Omega$ is the rearrangement of the set $E$ in the radial
direction.

\begin{prop}\label{rearrange} For any $E \in \mathcal{E}$ we have,

\begin{equation}\label{inequal}\mathcal{F}(E) \geq \mathcal{I}(u) + k(T+1),
\end{equation}

\

\noindent where
$k=\int_{\Omega} y^{-(n+1)}dz.$

\end{prop}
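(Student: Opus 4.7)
The plan is to mirror the blueprint of Proposition \ref{subgr}: pass to cylindrical coordinates via $x = e^w z$ and probe the sup definition of $\mathcal{P}(E)$ with product test fields $g(z,w) = \gamma(z)\eta(w)$. The volume identity will follow from Fubini together with the constraint $E \in \mathcal{E}$; the perimeter inequality $\mathcal{P}(E) \geq A(u)$ will come from choosing the one-dimensional cutoff $\eta$ appropriately.

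First I would prove $\mathcal{P}(E) \geq A(u)$. Fix any admissible $\gamma = (\tilde\gamma, \gamma_{n+1}) \in C_0^1(\Omega, T\Omega \times \mathbb{R})$ with $|\tilde\gamma|_{\mathbb{S}^n}^2 + |\gamma_{n+1}|^2 \leq 1$, and select $\eta \in C_c^\infty(-T-1,T+1)$ with $|\eta| \leq 1$ and $\eta \equiv 1$ on $[-T,T]$. Substituting $g(z,w) = \gamma(z)\eta(w)$ in the definition of $\mathcal{P}(E)$, changing variables, and applying Fubini yield
\begin{align*}
\mathcal{P}(E) &\geq \int_\Omega \textrm{div}_z\bigl[\tilde\gamma\, y^{-n}\bigr] \biggl(\int_{-T-1}^{T+1} \varphi_{\tilde E}(z,w)\,\eta(w)\,dw\biggr) dz \\
&\quad + \int_\Omega \gamma_{n+1}\, y^{-n} \biggl(\int_{-T-1}^{T+1} \varphi_{\tilde E}(z,w)\,\eta'(w)\,dw\biggr) dz.
\end{align*}
The hypothesis $\underline{C}_T \subseteq E \subseteq \overline{C}_T$ pins $\varphi_{\tilde E}(z,\cdot)$ to $1$ on $(-T-1,-T)$ and to $0$ on $(T,T+1)$. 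Combined with $\eta \equiv 1$ (hence $\eta' \equiv 0$) on $[-T,T]$, the inner integrals collapse to $u(z) + T + c_\eta$ and to $1$ respectively, where $c_\eta := \int_{-T-1}^{-T} \eta(w)\,dw$ is independent of $z$. The additive constant $T + c_\eta$ pairs with $\textrm{div}_z[\tilde\gamma\, y^{-n}]$ and contributes zero by the divergence theorem, since $\tilde\gamma$ is compactly supported in $\Omega$. What remains is exactly the integrand in the supremum defining $A(u)$, so taking the supremum over $\gamma$ gives $\mathcal{P}(E) \geq A(u)$.

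Next, by Fubini,
$$\textit{Vol}(E) = \int_\Omega y^{-(n+1)} \biggl(\int_{-T-1}^{T+1} \varphi_{\tilde E}(z,w)\,dw\biggr) dz,$$
and the same prescribed values of $\varphi_{\tilde E}$ on the buffer strips identify the inner integral with $u(z) + T + 1$, yielding $\textit{Vol}(E) = V(u) + k(T+1)$. Combining the two estimates gives
$$\mathcal{F}(E) = \mathcal{P}(E) + nH\,\textit{Vol}(E) \geq A(u) + nH\bigl(V(u) + k(T+1)\bigr) = \mathcal{I}(u) + nH\,k(T+1),$$
which is \eqref{inequal}.

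The main obstacle is the perimeter step. The buffer layers $(-T-1,-T)$ and $(T, T+1)$—the very reason for enlarging $C_T$ beyond $\overline{C}_T \setminus \underline{C}_T$—must give $\eta$ enough room to rise from $0$ to $1$ and descend back to $0$, so that $\eta \equiv 1$ on the middle strip $[-T,T]$ where the rearrangement lives; and the $z$-independent constants produced by those layers must be absorbed. The compact support of $\tilde\gamma$ in $\Omega$ makes this absorption automatic through the divergence theorem, after which the middle-strip integral $\int_{-T}^T \varphi_{\tilde E}(z,w)\,dw = u(z) + T$ isolates precisely the rearrangement $u$.
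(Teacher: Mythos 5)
Your proof is correct and follows essentially the same route as the paper: probe the perimeter with product test fields $g(z,w)=\gamma(z)\eta(w)$ in the $x=e^{w}z$ coordinates, let the containments $\underline{C}_{T}\subseteq E\subseteq \overline{C}_{T}$ freeze $\varphi_{\tilde{E}}$ on the buffer layers, and absorb the resulting $z$-independent constant by compact support of $\tilde{\gamma}$. (Incidentally, your final display correctly yields $\mathcal{F}(E)\geq \mathcal{I}(u)+nH\,k(T+1)$ rather than the stated $k(T+1)$; the same factor $nH$ is tacitly dropped in the statement of Proposition~\ref{subgr} as well, and since the two constants agree, the comparison in the proof of Theorem~\ref{subgraphmin} is unaffected.)
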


\begin{proof}
According to the definition,

$$ \mathcal{P}(E) \geq \int_{C}
\varphi_E \textrm{div}_x[x_{n+1}^{-n}g(x)]dx
= \int_{\Omega} \int_{-T-1}^{T} \varphi_{\tilde{E}}\textrm{div}_{z,w}[y^{-n}g(z,w)]dwdz$$

\

\noindent after performing the change of variable $x =
e^w z$. As in Proposition \ref{subgr} since $g$ is arbitrary, we
can choose

$$(\tilde{g}(z,w), g_{n+1}(z,w))= (\tilde{\gamma}(z),
\gamma_{n+1}(z))\eta(w),$$

\

\noindent where $\gamma = (\tilde{\gamma},
\gamma_{n+1})$ is a vector field compactly supported on $\Omega$
such that $|\tilde{\gamma}|^2_{\mathbb{S}^n} + |\gamma_{n+1}|^2
\leq 1$, while $\eta$ is compactly supported in $[-T-1,
T+1]$ and such that $\eta \equiv 1$ on $[-T, T]$ and
$|\eta| \leq 1.$ Thus,

\begin{align*}
\mathcal{P}(E) &\geq \int_{\Omega} \int_{-T-1}^{T}\varphi_{\tilde{E}}
\textrm{div}_{z,w}[\gamma\eta y^{-n}]dwdz=\\
&\int_{\Omega}\int_{-T-1}^{T}\varphi_{\tilde{E}}\textrm{div}_z[\tilde{\gamma}y^{-n}]\eta(w)dwdz +
\int_{\Omega}\int_{-T-1}^{T} \varphi_{\tilde{E}}\gamma_{n+1}(z)y^{-n}\eta'(w)dwdz.\end{align*}

\

From our choice of $\eta$ we have that

$$\int_{-T-1}^{-T} \eta'(w)dw =1,$$

\

\noindent and also $\varphi_{\tilde{E}} (z,w) \equiv
1$ for $-T-1 < w < -T.$ Thus, according to the definition
of $u$ we have

$$\mathcal{P}(E) \geq \int_{\Omega}
u(z)\textrm{div}_z[\tilde{\gamma} y^{-n}]dz + \int_{\Omega}
\gamma_{n+1}y^{-n} dz,$$

\

\noindent and the desired statement follows
by taking the sup over all $\gamma=(\tilde{\gamma}, \gamma_{n+1})$
of length smaller than 1, compactly supported in $\Omega$.

Finally, we compute

\begin{eqnarray}\label{preservevol}\textit{Vol}(E)= \int_{C} \varphi_{E} x_{n+1}^{-(n+1)}dx =
\int_{\Omega} (u(z) + T)y^{-(n+1)}dz + \\\nonumber
\int_{\Omega}\int_{-T-1}^{-T}y^{-(n+1)}dwdz =
V(u) + k(T+1),\end{eqnarray}

\

\noindent  which concludes the proof.
\end{proof}

We are now ready to prove our Theorem.

\

\textit{Proof of Theorem \ref{subgraphmin}.} Let $A \subset
\subset \Omega$ and let $E \in \mathcal{E}$ coincide with $V_T$ outside a compact set in $\{x \in
\mathbb{R}^{n+1} : x= e^wz, z\in A, -T-1 < w < T +
1\}.$ Then the function $u$ associated to $E$ coincides
with $v$ outside of $A$ and hence according to \eqref{equality}
and \eqref{inequal},

\begin{eqnarray*}
\mathcal{F}(V_T) \leq \mathcal{I}(v) + k(T+1) \leq
\mathcal{I}(u) + k(T+1) \leq \mathcal{F}(E).
\end{eqnarray*} \qed

\medskip

Since $V_T$ locally minimizes $\mathcal{F}$ in $\mathcal{E}$, it
is known that the boundary of $V_T$ is a regular (analytic)
hypersurface outside a closed set $S$, with $H_{n-6}(S) =0$ (see
\cite{S}). As an immediate corollary we shall prove that $v$ is
regular in $L = \Omega \setminus \textrm{proj}_\Omega S.$

Towards this aim, we need to recall the following
lemma that can be found in \cite{GS}.

\begin{lem}\label{radiality} Let $\Sigma$ be a constant mean curvature
hypersurface in $\mathbb{H}^{n+1}$ with position vector $X$ in $\R^{n+1}$ and unit normal $\nu$ with respect to the Euclidean metric. Let $|A|$ and $\Delta$ denote respectively the norm of the second fundamental
form of $\Sigma$ and the Laplace-Beltrami operator on $\Sigma$ with respect to the hyperbolic metric. Then,

\begin{equation}\label{elliptic}\Delta\frac{X\cdot\nu}{u}= (n-|A|^2)\frac{X\cdot\nu}{u},\end{equation}
\end{lem}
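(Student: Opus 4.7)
The plan is to identify $f := \frac{X\cdot\nu}{u}$ (with $u=x_{n+1}$) as the normal component, in the hyperbolic metric, of a Killing vector field of $\h^{n+1}$, and then invoke the standard Jacobi identity for such functions on CMC hypersurfaces. The Euclidean radial vector field $V = x^i\partial_i = X$ generates the one-parameter family $\phi_t(x)=e^t x$. Since $\phi_t^{*}g_E = e^{2t}g_E$ and $\phi_t^{*}(x_{n+1}^{-2}) = e^{-2t}x_{n+1}^{-2}$, we have $\phi_t^{*}g_H = g_H$; thus $V$ is a Killing field of $\h^{n+1}$. Because $g_H = u^{-2}g_E$, the hyperbolic unit normal is $\nu_H = u\,\nu$, and so
\[
g_H(V,\nu_H) \;=\; u^{-2}\,(X\cdot u\nu) \;=\; \frac{X\cdot\nu}{u} \;=\; f.
\]

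Next I would apply the well-known identity: if $\Sigma$ has constant mean curvature in an ambient space $M$ and $V$ is a Killing field on $M$, then the function $f = g_M(V,\nu_H)$ lies in the kernel of the stability (Jacobi) operator, i.e.
\[
\Delta f + \bigl(|A|^{2} + \mathrm{Ric}_M(\nu_H,\nu_H)\bigr)\,f \;=\; 0,
\]
where $\Delta$ and $|A|$ are the Laplace--Beltrami operator and the norm of the second fundamental form of $\Sigma$ in the induced hyperbolic metric. A clean derivation proceeds by picking a local frame $\{e_i\}$ orthonormal for $g_H$, writing $\nabla^\Sigma_{e_i}V = (\nabla^M_{e_i}V)^{\top}$ and using the Killing property $g_H(\nabla^M_{e_i}V,e_j) + g_H(\nabla^M_{e_j}V,e_i)=0$, together with the Gauss and Codazzi equations and the CMC condition (which makes $\sum_i\nabla^\Sigma_{e_i}(A(e_i,\cdot))=0$). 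The Ricci-curvature term appears from the interchange of covariant derivatives.

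For the ambient space $\h^{n+1}$ of constant sectional curvature $-1$, one has $\mathrm{Ric}_M(\nu_H,\nu_H) = -n$, so the Jacobi identity becomes
\[
\Delta f = (n-|A|^{2})\,f,
\]
which is the desired equation \eqref{elliptic}.

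The main obstacle is the derivation of the Jacobi identity for Killing-generated variations; however, since only the final elliptic identity is needed and the computation is classical (Gauss equation plus the Killing property, see e.g.\ the standard treatment of stability for CMC surfaces), this can be carried out with a short frame calculation. No extra ingredient is required beyond the verification that $X$ is Killing for $g_H$ and the rescaling $\nu_H = u\nu$ — both of which are immediate in the half-space model.
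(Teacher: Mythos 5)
The paper does not prove Lemma~\ref{radiality}; it simply cites~\cite{GS} for the statement, so there is no in-paper argument to compare against line by line. Your proposal is correct, and it gives the standard conceptual explanation of the identity: the Euclidean position field $X$ generates the dilations $x\mapsto e^t x$, which are isometries of $g_H=u^{-2}g_E$ (the conformal factor $e^{-2t}$ cancels the scaling $e^{2t}$ of $g_E$), hence $X$ is a Killing field of $\h^{n+1}$; the hyperbolic unit normal is $\nu_H=u\nu$, so $g_H(X,\nu_H)=u^{-2}(X\cdot u\nu)=X\cdot\nu/u$; and the normal component of a Killing field along a CMC hypersurface lies in the kernel of the Jacobi operator $\Delta+|A|^2+\mathrm{Ric}(\nu_H,\nu_H)$. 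Since $\mathrm{Ric}(\nu_H,\nu_H)=-n$ in the constant-curvature $-1$ space, this yields $\Delta f=(n-|A|^2)f$. Each of the small verifications (Killing property, $\nu_H=u\nu$, and the inner-product identity) is right. The one ingredient you use as a black box — that $g_M(V,\nu_H)$ is a Jacobi function when $V$ is Killing and $H$ is constant — is classical, and the short derivation you sketch (tangential part of $V$ kills $\nabla H=0$, normal part gives the linearized mean curvature, Killing invariance makes the total variation vanish) is exactly how it goes. This is the natural ``why'' behind the lemma; the reference \cite{GS} establishes the same identity but via a more hands-on computation with the position vector and support function rather than by invoking the stability operator abstractly. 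Either route is fine; yours is cleaner and makes the geometric content (a Jacobi field coming from the radial symmetry) immediately visible.
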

\noindent \textit{where $u$ denotes the height function} $u=X \cdot \textbf{e}.$

\

\begin{cor} Let $v \in BV(\Omega) \cap L^\infty(\Omega)$ be a local minimizer to
$\mathcal{I}(\cdot).$ Then $v \in C^{\infty}(L)$ with
$H_{n-6}(\Omega \setminus L)=0.$
\end{cor}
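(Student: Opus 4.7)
The plan is to combine Theorem \ref{subgraphmin} with the standard regularity theory for parametric $\mathcal{F}$-minimizers in $\mathbb{H}^{n+1}$, and then transfer the conclusion back to the radial graph $v$ via the sphere-projection $\pi(x)=x/|x|$. Fix $T>\|v\|_{L^\infty}$; by Theorem \ref{subgraphmin} the truncated subgraph $V_T$ is a local minimizer of $\mathcal{F}$ on $\mathcal{E}$, hence by the standard regularity theory (see \cite{S}) $\partial V_T$ is a smooth analytic hypersurface of constant hyperbolic mean curvature $H$ off a closed singular set $S$ with $H_{n-6}(S)=0$. Since $C_T$ is bounded away from the origin, $\pi$ is Lipschitz on $C_T$, so $H_{n-6}(\pi(S))=0$ and consequently $H_{n-6}(\Omega\setminus L)=0$.

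For each $z_0\in L$, the entire radial ray through $z_0$ inside $C_T$ avoids $S$; choose any point $X_0$ of $\partial V_T$ lying on this ray in the interior of $C_T$ (such $X_0$ exists since $\|v\|_\infty<T$ forces $\partial V_T$ to cross the interior of $C_T$ on this ray). A neighborhood $\Sigma\subset\partial V_T$ of $X_0$ is then a smooth CMC hypersurface. To obtain a smooth radial graph representation of $v$ near $z_0$, it is enough to show that $\pi|_\Sigma$ is a local diffeomorphism at $X_0$, equivalently that $X_0\cdot\nu(X_0)\neq 0$. Once this holds, inverting $\pi|_\Sigma$ yields a smooth radial graph $\tilde{v}$ defined near $z_0$, and the subgraph structure of $V_T$ forces $\tilde{v}=v$ almost everywhere in that neighborhood, producing the sought smooth representative and hence $v\in C^\infty(L)$.

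To establish the transversality $X_0\cdot\nu\neq 0$, I appeal to Lemma \ref{radiality}: the function $f=(X\cdot\nu)/u$ satisfies the linear elliptic equation $\Delta f=(n-|A|^2)f$ on the regular part of $\partial V_T$. Because $V_T$ is a radial subgraph and $\nu$ is its outward unit normal, the radial direction points out of $V_T$ at every smooth boundary point, so $f\geq 0$. The strong maximum principle then gives, on each connected component of the regular part, either $f>0$ or $f\equiv 0$. The main obstacle is to rule out the second alternative: $f\equiv 0$ on a component $\Sigma_1$ would force the position vector field $X$ to be everywhere tangent to $\Sigma_1$, so $\Sigma_1$ would be ruled by radial rays and its sphere-projection would be at most $(n-1)$-dimensional; but the $n$-dimensional component $\Sigma_1\subset\partial V_T$ must coincide up to $H_n$-null sets with a piece of the graph of $v$, which projects injectively onto an $n$-dimensional subset of $\Omega$, yielding the required contradiction. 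Therefore $f(X_0)>0$, and the argument concludes.
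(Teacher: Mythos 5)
Your proposal is correct and takes essentially the same approach as the paper: both pass to the set $V_T$, invoke the regularity theory of \cite{S} via Theorem \ref{subgraphmin}, use $L=\Omega\setminus \mathrm{proj}_\Omega S$, and then combine Lemma \ref{radiality}, the subgraph inequality $X\cdot\nu\geq 0$, and the strong maximum principle to exclude radial tangency and hence obtain a smooth graph representation. Your concluding dimension-count simply fleshes out the step the paper compresses into ``contradicts the analyticity of the graph of $v$''.
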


\begin{proof} Let $\Sigma$ be the radial graph associated to $v$. We use the notation from Lemma \ref{radiality}.
Assume by contradiction that $X\cdot \nu =0$ at some point $z \in
L$. Then,

$$X \cdot \nu \geq 0.$$

\smallskip

\noindent Hence
according to \eqref{elliptic} and the strong maximum principle we have

$$X \cdot \nu \equiv
0 \ \text{in} \ L,$$

\smallskip

\noindent which contradicts the analyticity of the graph of $v$
outside of the singular set $S.$
\end{proof}

Theorem \ref{lowd} is a straightforward consequence of the
Corollary above.

\

Using Propositions \ref{subgr} and \ref{rearrange}, we can
also prove the following uniqueness result which will be used in
the next section. First we set some notation, to which we will refer later.

Let $\overline{v}\geq\underline{v}$ be continuous functions on
$\Omega$ with $\overline{v}=\underline{v}=\varphi$ on $\partial
\Omega$, $|\overline{v}|,|\underline{v}| \leq T$. Denote by
$\overline{V}, \underline{V}$ respectively the subgraphs in $C_T$ of the radial surfaces
$X=e^{\overline{v}}z$, and $X=e^{\underline{v}}z$, $z \in \Omega$. Let

$$\mathcal{V}:=\{E \subseteq C_T: E \ \text{measurable}, \  \underline{V} \subseteq E \subseteq \overline{V} \}.$$

\smallskip

\begin{lem}\label{minuniq}The minimization problem for $\mathcal{F}(\cdot)$ in the class $\mathcal{V}$ admits a unique solution $E$. Moreover,
$\partial E$ is a radial graph over $\s_+^n$.\end{lem}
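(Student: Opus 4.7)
The plan is to combine the direct method (for existence) with the rearrangement results already established (to exhibit a subgraph minimizer), invoke strict convexity of $\mathcal{I}$ (to make the subgraph unique), and then use submodularity of the weighted perimeter to force every minimizer to coincide with that subgraph. This last step, done via a lattice trick, avoids having to prove a rigidity theorem for the radial rearrangement \eqref{rearrf}.

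For existence I would follow the usual compactness route: a minimizing sequence $\{E_j\}\subseteq\mathcal{V}$ has $\textit{Vol}(E_j)$ bounded (since $\underline{V}\subseteq E_j\subseteq\overline{V}$), hence $\mathcal{P}(E_j)$ is bounded too; BV-compactness on $C_T$ and lower semicontinuity of $\mathcal{P}$ then yield a minimizer $E\in\mathcal{V}$. For the subgraph structure, given any $E\in\mathcal{V}$ I would define $u$ via \eqref{rearrf} and let $U$ be its subgraph. The pinching $\underline{V}\subseteq E\subseteq\overline{V}$ translates to $\underline{v}\le u\le\overline{v}$, so $U\in\mathcal{V}$, and Propositions \ref{subgr} and \ref{rearrange} give
$$\mathcal{F}(U)=\mathcal{I}(u)+k(T+1)\le\mathcal{F}(E),$$
so when $E$ is a minimizer so is $U$. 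If $U^{*}$ denotes such a subgraph minimizer, with associated function $u^{*}$, then Proposition \ref{subgr} makes $u^{*}$ a minimizer of $\mathcal{I}$ over the convex class $\{w\in BV(\Omega):\underline{v}\le w\le\overline{v}\}$; every competitor there has trace $\varphi$ on $\partial\Omega$, so the strict convexity argument from Remark \ref{unique} forces $u^{*}$, hence $U^{*}$, to be unique.

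The main step is to show that any minimizer $E$ equals $U^{*}$. Here I would set $E':=E\cup U^{*}$ and $E'':=E\cap U^{*}$, both of which remain in $\mathcal{V}$. Submodularity of the weighted perimeter (the inequality $|D\chi_{A\cup B}|+|D\chi_{A\cap B}|\le|D\chi_A|+|D\chi_B|$ as measures, tested against the positive weight $x_{n+1}^{-n}$) together with additivity of $\textit{Vol}$ yields
$$\mathcal{F}(E')+\mathcal{F}(E'')\le \mathcal{F}(E)+\mathcal{F}(U^{*})=2\min_{\mathcal{V}}\mathcal{F},$$
so $E'$ and $E''$ are themselves minimizers. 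Applying the previous paragraph to $E'$ produces a rearranged subgraph equal to $U^{*}$, and the volume identity \eqref{preservevol} then gives $\textit{Vol}(E')=\textit{Vol}(U^{*})$; since $E'\supseteq U^{*}$ and the weight $x_{n+1}^{-(n+1)}$ is strictly positive, this forces $E'=U^{*}$ up to a null set. The analogous computation yields $E''=U^{*}$, and the sandwich $U^{*}=E''\subseteq E\subseteq E'=U^{*}$ forces $E=U^{*}$. The main obstacle I anticipate is precisely this last identification; the lattice/submodularity argument sidesteps a direct rigidity theorem for the rearrangement.
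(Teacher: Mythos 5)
Your proof is correct and it uses the same core ingredients as the paper --- submodularity of the weighted perimeter, the rearrangement Propositions~\ref{subgr} and~\ref{rearrange}, and the strict-convexity argument of Remark~\ref{unique} --- but assembles them in a different order. The paper argues by contradiction: if $E_1, E_2$ are two distinct minimizers, submodularity makes $E_1\cap E_2$ and $E_1\cup E_2$ minimizers as well, whose rearrangement functions $u_1, u_2$ must differ because $\textit{Vol}(E_1\cap E_2)<\textit{Vol}(E_1\cup E_2)$ and the rearrangement preserves volume up to the additive constant $k(T+1)$ of \eqref{preservevol}; yet Propositions~\ref{subgr} and~\ref{rearrange} force both $u_i$ to minimize $\mathcal{I}$ over the pinched convex class $\{\underline v\le w\le\overline v\}$, and strict convexity gives $u_1=u_2$, a contradiction. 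You instead argue directly: you fix the canonical subgraph minimizer $U^{*}$ (unique by exactly the same rearrangement-plus-convexity step) and then identify every minimizer $E$ with $U^{*}$ by forming $E\cup U^{*}$ and $E\cap U^{*}$ and exploiting the inclusions $E\cap U^{*}\subseteq U^{*}\subseteq E\cup U^{*}$ together with the volume identity to force all three to coincide. The mathematical content is the same; what your route buys is explicitness: you also supply the existence step (which the paper's proof omits, taking it as standard) and you make the conclusion that $\partial E$ is a radial graph emerge from the argument, rather than leaving it as an implicit corollary of uniqueness (namely that the rearrangement of the unique minimizer is again a minimizer, hence equal to it).
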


\begin{proof}Let $E_1$ and $E_2$ be distinct minimizer of $\mathcal{F}$ in
$\mathcal{V}.$ Using (see for example \cite{G}, Lemma 15.1)
$$\mathcal{P}_{C_T}(E_1 \cap E_2) + \mathcal{P}_{C_T}(E_1 \cup E_2) \leq
\mathcal{P}_{C_T}(E_1) + \mathcal{P}_{C_T}(E_2)$$

\smallskip

\noindent we obtain that $E_1 \cap E_2$, $E_1 \cup E_2$ also
minimize $\mathcal{F}$ in the same class. Denote by $u_1$, $u_2$
be the associated rearrangement functions (given by formula
\eqref{rearrf}) for these minimizers. Notice that $u_1 \neq u_2$.
Indeed $E_1 \neq E_2$ implies that $E_1 \cap E_2$ has smaller
volume than $E_1 \cup E_2$, and the volume is preserved by the
rearrangements up to an additive constant (see
\eqref{preservevol}). Then according to Propositions \ref{subgr}
and \ref{rearrange}, $u_1$ and $u_2$ minimize
$\mathcal{I^{\varphi}}$ in the class of all competitors $v$ with
$\underline{v} \leq v \leq {\overline{v}}.$ Since $(u_1 + u_2)/2$
is in the same class, we can apply the same convexity argument as
in Remark \ref{unique} to conclude that $u_1 = u_2$. Thus, we
reached a contradiction.
\end{proof}

\section{The  Dirichlet problem with smooth boundary data.}

In this section we show that upon assuming the right condition on the boundary of $\Omega$, it is possible to set up and solve the Dirichlet problem for the energy functional $\mathcal{I}(\cdot)$ in the classical sense, that is finding a smooth minimizer $v$ among all competitors with the same smooth boundary data. This result is of independent interest. Moreover, a corollary of this result, together with the gradient bound presented in the next section will allow us to remove the dimensional constraint of Theorem \ref{lowd} and prove the interior smoothness of bounded $BV$ minimizers in any dimension.

Precisely we prove the following result.

\begin{thm}\label{existence2} Let $\Omega$ be a subdomain of $\mathbb{S}^n_+$  with $\partial \Omega \in C^2$, and let $\gamma$ be a $C^2$ radial graph over $\partial \Omega$. Let $h$ be  the hyperbolic mean curvature  of the radial cone over $\partial \Omega$ restricted to  $\partial \Omega$.  Then if $h> |H|$ , there exists a unique smooth radial graph
$\Sigma$  of constant mean curvature H in $\h^{n+1}$ (defined over $\Omega$)  with  boundary $\gamma.$
 \end{thm}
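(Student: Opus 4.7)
The plan is to apply the continuity method to equation \eqref{dirichlet1} with Dirichlet data $v|_{\partial\Omega} = \varphi$, where $\varphi \in C^2(\partial\Omega)$ is defined by $\gamma = \{e^{\varphi(z)}z : z \in \partial\Omega\}$. I will connect the target equation to the weighted minimal surface equation ($H = 0$, solvable in $C^{2,\alpha}(\overline\Omega)$ via the variational setup of Section 2, since the hypothesis $h > 0$ holds trivially) through the family
\begin{equation*}
\textrm{div}_z\!\left(\frac{y^{-n}\nabla v}{\sqrt{1 + |\nabla v|^2}}\right) = ntHy^{-(n+1)}, \qquad t \in [0,1],
\end{equation*}
and let $T \subset [0,1]$ denote the set of parameters for which a $C^{2,\alpha}(\overline\Omega)$ solution exists. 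Openness of $T$ is the standard implicit function theorem applied to the linearized (uniformly elliptic) operator. Closedness reduces, via Schauder together with DeGiorgi--Nash--Moser, to a uniform $C^1(\overline\Omega)$ estimate. The $C^0$ bound is furnished by the horospherical sub/super-solutions of Remark \ref{horosphere} through Corollary \ref{maxprinc}, and a global gradient bound follows from a boundary gradient bound by a maximum principle for $|\nabla v|^2$ applied tangentially along $\Sigma$.

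The crux of the argument, and the step where the hypothesis $h > |H|$ enters, is the construction of boundary barriers. For each $z_0 \in \partial\Omega$ I will build an upper barrier on a one-sided tubular neighborhood $\{0 < d(z) < \delta\}$ of $\partial\Omega$ of the form
\begin{equation*}
\bar v(z) = \Phi(z) + \mu\bigl(d(z)\bigr),
\end{equation*}
where $d(z) = \textrm{dist}_{\mathbb{S}^n}(z,\partial\Omega)$, $\Phi$ is any $C^2$ extension of $\varphi$ into $\Omega$, and $\mu : [0,\delta] \to \mathbb{R}$ is a smooth concave profile with $\mu(0) = 0$ and $\mu'(0) = M$ large. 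Evaluating the mean curvature operator on $\bar v$, the leading contribution as $\mu' \to \infty$ is precisely the hyperbolic mean curvature of the radial cone over $\partial\Omega$, namely $h$; the remaining terms come from the second fundamental form of $\partial\Omega$ in $\mathbb{S}^n$, from the derivatives of $\Phi$, and from the weight $y^{-n}$, and are lower order in $M$. The strict inequality $h > |H|$ then leaves positive slack to absorb these terms, provided $\mu$ is chosen with the right decay of $(\mu')^2/(-\mu'')$ (an explicit choice such as $\mu(s) = \lambda^{-1}\log(1 + K\lambda s)$ works for suitable $K, \lambda$). A mirror ansatz yields a lower barrier. I expect this barrier computation to be the main obstacle, since it must be carried out uniformly in $z_0 \in \partial\Omega$, which is exactly what requires the $C^2$ regularity of both $\partial\Omega$ and $\gamma$.

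Comparison with these barriers gives $|\partial_\nu v(z_0)| \leq M$ at every boundary point, supplying the boundary — and through the maximum principle step above, global — gradient estimate. Closedness of $T$ then forces $T = [0,1]$, and Schauder bootstrap promotes the $C^{2,\alpha}$ solution at $t = 1$ to a $C^\infty$ radial graph $\Sigma$. Uniqueness is immediate from the strict convexity recorded in Remark \ref{unique}: any two smooth radial graphs with boundary $\gamma$ are minimizers of $\mathcal{I}^\varphi$ (Remark \ref{sol=min}) with identical traces, hence coincide.
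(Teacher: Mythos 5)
Your overall strategy (continuity method on the PDE, with classical barriers at the boundary) is genuinely different from the paper's. The paper instead works with the $BV$ minimizer produced in Section 2 and proves it is Lipschitz by a translation/rearrangement argument: one shows that the slightly dilated and translated radial graph $X=e^{v+K\eps}z+\eps\tau$ lies above $X=e^vz$, using the uniqueness of parametric minimizers (Lemma \ref{minuniq}) and the comparison principle (Lemma \ref{uniqtrace}), which yields a cone condition and hence Lipschitz continuity; the interior gradient bound (Proposition \ref{th2}) and Schauder then finish. Your barrier construction at $\partial\Omega$ matches Proposition \ref{bar} closely, and your uniqueness argument via Remarks \ref{unique} and \ref{sol=min} is exactly the paper's. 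So the boundary analysis and the endgame agree; the body of the argument does not.

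There are two genuine gaps in the continuity-method route as you have written it. First, the starting point: you assert that the $H=0$ (weighted minimal surface) problem is solvable in $C^{2,\alpha}(\overline\Omega)$ ``via the variational setup of Section 2.'' Section 2 only produces a $BV(\Omega)\cap L^\infty$ minimizer; it gives no classical regularity whatsoever, and obtaining it is precisely the point of the theorem being proved. As stated this is circular. The standard repair — deform simultaneously in $t$ and in the boundary data from a constant datum, whose solution is a constant — would work, but is not what you wrote. Second, and more seriously: you claim the global gradient bound follows from the boundary gradient bound ``by a maximum principle for $|\nabla v|^2$ applied tangentially along $\Sigma$.'' This fails here. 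In Proposition \ref{th2} the paper computes $\mathcal{L}W \ge -CW$ with $C>0$; the zeroth-order term of the wrong sign, coming from the weight $y^{-n}$, is unavoidable, and it destroys the naive maximum principle for $W$. That is exactly why the paper must prove a genuinely interior gradient estimate using the localized auxiliary function $h=\eta W$ with $\eta=e^{K\phi}-1$. Without either reproducing such an interior estimate or showing that some modified quantity (e.g.\ $We^{\lambda v}$ for suitable $\lambda$) does satisfy a maximum principle — a computation you have not done — the reduction of the global $C^1$ bound to the boundary one is unjustified, and the closedness step of your continuity argument is open.
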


Theorem \ref{existence} follows by standard elliptic theory, combining Theorem \ref{existence2} and the interior gradient bound Proposition \ref{th2} in the next section.

We first need some preliminaries. Let $\Sigma$ be an hypersurface in $\mathbb{H}^{n+1}$ and let $X$ be the position vector
of $\Sigma$ in $\R^{n+1}$. We set $\textbf{n}$ to be a global unit normal vector
field to $\Sigma$ with respect to the hyperbolic metric. This
determines a unit normal $\nu$ to $\Sigma$ with respect to the
Euclidean metric by the relation

$$\nu = \frac{\textbf{n}}{u},$$

\

\noindent where $u$ denotes the height function $u=X \cdot \textbf{e}.$
The hyperbolic principal curvatures $\kappa_1, \ldots \kappa_n$ of $\Sigma$ (with respect to
$\textbf{n}$) are related to the Euclidean principal curvatures  $\tilde{\kappa}_1, \ldots
\tilde{\kappa}_n $ of $\Sigma$ (with respect to $\nu$)
by the well-known formula

\[\kappa_i=u \tilde{\kappa}_i+\nu^{n+1}~.\]

\medskip

\noindent Therefore the hyperbolic mean curvature $H$ and Euclidean mean curvature $H_{E}$ are related
by

\be \label{hmc}
H=uH_{E}+\nu^{n+1}.
\ee

\medskip

Let $\tau_1,\ldots,\tau_n$ be a local frame of smooth vector
fields on $\s^n_+$.
Denote by $\sigma_{ij}=\tau_i\cdot\tau_j$ the
standard metric on $\s^n$ and $\sigma^{ij}$ its inverse.
For a function $v$ on $\s^n$, we use the notation $v_i=\nabla_i v=
\nabla_{\tau_i}v~,~v^i=\sigma^{ik}v_k~,~  v_{ij}=\nabla_j\nabla_iv,$ etc.

For a radial graph $X=e^v z$, the induced Euclidean metric and its inverse are given by

\be \label{metric} \tilde{g}_{ij}=e^{2v}(\sigma_{ij}+v_i
v_j)~,~\tilde{g}^{ij}=e^{-2v}\left(\sigma^{ij}-\frac{v^i
v^j}{W^2}\right), \ee

\smallskip

\noindent where

 \be\label{W}  W=\sqrt{1+|\nabla v|^2}.\ee

 \

\noindent The outward unit normal to $X$ is

\be \label{normal} \nu=\frac{z-\nabla v}{W}, \ee

\

\noindent and
the Euclidean second fundamental form is given by

\[\tilde{b}_{ij}=\frac{e^v}W(v_{ij}-v_i v_j-\sigma_{ij}).\]

\

\noindent Therefore, using \eqref{metric} we have

\be \label{emc} nH_{E}=\tilde{g}^{ij}
\tilde{b}_{ij}=\frac{e^{-v}}W\left\{\left(\sigma^{ij}-\frac{v^i
v^j}{W^2}\right)v_{ij}-n\right\}. \ee

\

\noindent Combining \eqref{hmc}, \eqref{normal}, \eqref{emc}, we have
\begin{lem} \label{equation}The radial graph $X=e^v z$ has constant hyperbolic mean curvature $H$ if and only if  $v$ satisfies the nondivergence form elliptic equation\end{lem}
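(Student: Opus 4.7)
The proof is essentially a direct computation that assembles the formulas \eqref{hmc}, \eqref{normal}, and \eqref{emc} already established in the preceding paragraphs; the only ``idea'' is to identify each geometric quantity with its expression in terms of $v$.

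First I would start from the relation \eqref{hmc} between the hyperbolic and Euclidean mean curvatures,
\[
nH \;=\; n u H_{E} + n\nu^{n+1},
\]
and substitute the three ingredients available. For the height function, $u = X\cdot\textbf{e} = e^{v}\,y$ since $X = e^{v}z$ and $y = z\cdot\textbf{e}$. For the vertical component of $\nu$, \eqref{normal} gives
\[
\nu^{n+1} \;=\; \frac{y - \nabla v\cdot\textbf{e}}{W};
\]
here the scalar $\nabla v\cdot\textbf{e}$ should be read as the inner product on $\s^{n}$ of $\nabla v$ with the tangential part of $\textbf{e}$, which coincides with $\nabla y$ because the extrinsic gradient of $y = z\cdot\textbf{e}$ projected to $T_{z}\s^{n}$ is exactly $\textbf{e} - yz$. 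Finally, \eqref{emc} supplies the expression for $nH_{E}$ in local coordinates.

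Plugging these into $nH = nu H_{E} + n\nu^{n+1}$ and simplifying, the terms $\pm \tfrac{ny}{W}$ generated by the $-n$ inside $\{\cdots\}$ of \eqref{emc} and by $\tfrac{ny}{W}$ in $n\nu^{n+1}$ cancel, leaving the nondivergence equation
\[
y\left(\sigma^{ij} - \frac{v^{i} v^{j}}{W^{2}}\right) v_{ij} \;-\; n\,\langle \nabla v,\nabla y\rangle \;=\; nH\,W,
\]
which is the desired equation, and the argument is reversible since each step is an equivalence. Ellipticity then follows immediately: the coefficient matrix $a^{ij} = y\bigl(\sigma^{ij} - v^{i}v^{j}/W^{2}\bigr)$ has eigenvalues $y$ on the $(n-1)$-dimensional subspace orthogonal to $\nabla v$ and $y/W^{2}$ along $\nabla v$, both strictly positive on $\s^{n}_{+}$.

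There is no genuine obstacle here: the calculation is purely mechanical once one agrees that $\nabla v\cdot\textbf{e}$ in \eqref{normal} means $\langle \nabla v,\nabla y\rangle_{\s^{n}}$. The only point that deserves a line of justification is this interpretation of the ``$(n+1)$-component'' of the intrinsic gradient, and the cancellation of the $ny/W$ terms, which is what produces a clean nondivergence form linear in the Hessian with bounded lower-order terms.
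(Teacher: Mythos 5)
Your proposal is correct and takes essentially the same route as the paper: substitute $u = e^v y$, $\nu^{n+1} = (y - \textbf{e}\cdot\nabla v)/W$ from \eqref{normal}, and the expression \eqref{emc} for $nH_E$ into the relation $H = uH_E + \nu^{n+1}$ of \eqref{hmc}, then observe that the $\pm ny/W$ terms cancel. Your displayed equation is just \eqref{local} multiplied through by $yW$, and your remarks about reading $\nabla v\cdot\textbf{e}$ as $\langle\nabla v,\nabla y\rangle_{\s^n}$ and about the eigenvalues of $a^{ij}$ are accurate, if slightly more explicit than what the paper records.
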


\be \label{local} \frac1W
a^{ij}v_{ij}=\frac{n}y\left(H+\frac{\textbf{e}\cdot \nabla
v}W\right),~a^{ij}=\sigma^{ij} - \frac{v^i v^j}{W^2}.\ee

\

It is easily seen that \eqref{local} can be written in divergence form as

\begin{equation}\label{dirichlet}\textrm{div}_{z}\left(\frac{y^{-n}\nabla{v}}{\sqrt{1+|\nabla v|^2}}\right)=
nHy^{-(n+1)}, \end{equation}

\

\noindent which is the Euler-Lagrange equation of our functional
\eqref{functional}, the usual area plus $nH$ volume functional for the
hyperbolic radial graph.

Given a subdomain $\Omega$ of $\mathbb{S}^{n}_+$ we can then formulate
(according to Lemma \ref{equation}) the following Dirichlet problem for a
radial graph $X=e^v z$ over $\Omega$ of constant hyperbolic mean curvature
$H$,

\begin{eqnarray}\displaystyle\label{problem}
& \dfrac1W \left(\sigma^{ij} -\dfrac{v^i v^j}{W^2}\right)v_{ij}=\dfrac{n}y\left(H+\dfrac{\textbf{e}\cdot \nabla v}W\right)\quad \text{in} \ \Omega, \\\nonumber & \
\\\label{bd}
& v = \phi \quad \text{on} \ \partial \Omega.\\\nonumber
\end{eqnarray}

\begin{rem} An equivalent problem has been studied (even for prescribed
mean curvature) by Nitsche \cite{N} using a more complicated model of
hyperbolic space. However as we shall see below, the problem can be easily
solved directly, even for continuous boundary data. \end{rem}

Theorem \ref{existence2}, which is an existence and uniqueness statement
for the Dirichlet problem \eqref{problem}-\eqref{bd}, will follow from the
following result, by standard elliptic theory.

\begin{thm}\label{bigdomain} Let $h$ be the hyperbolic mean curvature of
the radial cone $C$ over $\partial \Omega ~\mbox{restricted to $\partial
\Omega$},$ and let $\phi \in C^2(\s^n_+).$ Then if $h> |H|$, there exists
a unique minimizer $v$ of $\mathcal{I}(\cdot)$ in $C^{0,1}(\Omega)$
such that $v=\phi$ continuously on $\partial \Omega$.
 \end{thm}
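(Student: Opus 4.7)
The plan is to bracket the candidate minimizer between a smooth supersolution $\overline{v}$ and a smooth subsolution $\underline{v}$ that both agree with $\phi$ on $\partial\Omega$, then invoke Lemma \ref{minuniq} to recover a radial--graph parametric minimizer, and finally translate that minimizer back into a Lipschitz radial graph via Propositions \ref{subgr}--\ref{rearrange}. The main technical obstacle, and the only step that uses the hypothesis $h>|H|$, is the construction of these barriers. In a tubular neighborhood of $\partial\Omega$ I would use the ansatz
\[
\overline{v}(z) = \phi(z) + \psi(d(z)),
\]
where $d$ is the signed distance in $\mathbb{S}^n$ (positive inside) and $\psi$ satisfies $\psi(0)=0$, $\psi'>0$, $\psi''<0$, $\psi'(0^+)=+\infty$. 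Substituting into the non-divergence form \eqref{local} and expanding to leading order in $1/\psi'$, the dominant contribution on $\partial\Omega$ turns out to be proportional to $h-H$, where $h$ is precisely the hyperbolic mean curvature of the radial cone over $\partial\Omega$. The hypothesis $h>|H|$ leaves exactly the slack needed for a suitable choice of $\psi$ to make $\overline{v}$ a strict supersolution in a tube of width $\delta$; beyond this tube, $\overline{v}$ is smoothly interpolated to a large constant or to a hyperbolic horospherical graph as in Remark \ref{horosphere}, producing a globally defined smooth supersolution with $\overline{v}=\phi$ on $\partial\Omega$ and $\overline{v}\ge\phi$ in $\Omega$. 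The subsolution $\underline{v}\le\phi$ is constructed symmetrically.

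With the barriers in hand, fix $T > \max(\|\overline{v}\|_\infty,\|\underline{v}\|_\infty)$ and let $\underline V,\overline V$ denote the subgraphs in $C_T$ of $\underline v, \overline v$. Lemma \ref{minuniq} furnishes a unique minimizer $E$ of $\mathcal F$ in the class $\mathcal V$ with $\partial E=\{X=e^{u(z)}z\}$ for some function $u$, and the inclusion $\underline V\subseteq E\subseteq\overline V$ yields $\underline v\le u\le \overline v$ in $\Omega$. Since $\underline v=\overline v=\phi$ on $\partial\Omega$, this squeeze forces $u$ to attain $\phi$ continuously on $\partial\Omega$, with a Lipschitz modulus inherited from the two barriers. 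The next step is to verify that $u$ actually minimizes $\mathcal I$ against all BV competitors with trace $\phi$. Given such a competitor $v$, the truncation $v^{\star}=\max(\min(v,\overline v),\underline v)$ satisfies $\mathcal I(v^{\star})\le \mathcal I(v)$ by reproducing the integration by parts in \eqref{compare} (using that $\overline v,\underline v$ are classical super/sub--solutions, hence minimizers by Remark \ref{sol=min}); the subgraph of $v^{\star}$ then lies in $\mathcal V$, so Propositions \ref{subgr}--\ref{rearrange} together with the $\mathcal F$--minimality of $E$ give $\mathcal I(u)\le \mathcal I(v^{\star})\le \mathcal I(v)$.

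For the interior Lipschitz estimate, the parametric minimizer $\partial E$ is strictly sandwiched between the two smooth radial graphs on every compact subdomain; classical regularity for codimension--one local $\mathcal F$--minimizers combined with the absence of vertical tangents guaranteed by Lemma \ref{minuniq} then forces $u$ to be locally Lipschitz (indeed smooth) in the interior of $\Omega$. Coupling this with the boundary Lipschitz bound from the barriers yields $u\in C^{0,1}(\Omega)$. Finally, uniqueness within this class is immediate from Lemma \ref{uniqtrace} applied with $\phi_1=\phi_2=\phi$.
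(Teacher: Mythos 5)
Your barrier construction is essentially the paper's Proposition \ref{bar}, and your reduction to a unique parametric minimizer $E$ in the class $\mathcal V$ (via Lemma \ref{minuniq} and Propositions \ref{subgr}--\ref{rearrange}) does appear in the paper's argument. But the step where you derive the \emph{interior} Lipschitz estimate is a genuine gap, and it is precisely the step that the paper's proof is engineered to handle.

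You write that ``classical regularity for codimension-one local $\mathcal F$-minimizers combined with the absence of vertical tangents guaranteed by Lemma \ref{minuniq} then forces $u$ to be locally Lipschitz (indeed smooth).'' This does not work in arbitrary dimension. The regularity theory for local perimeter-type minimizers (used in Section 3 to prove Theorem \ref{lowd}) only gives that $\partial E$ is analytic outside a closed singular set $S$ with $H_{n-6}(S)=0$; for $n\ge 7$ the singular set may well be nonempty, and Lemma \ref{minuniq} says nothing more than that $\partial E$ is a \emph{radial graph} --- which is true of many nonsmooth, non-Lipschitz boundaries. So the sandwich $\underline v\le u\le\overline v$ and the graphical property alone do not rule out singular points in the interior, and the ``absence of vertical tangents'' argument (Lemma \ref{radiality} plus the strong maximum principle) is only available \emph{at regular points} of $\partial E$. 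Theorem \ref{bigdomain} is precisely what the paper uses to bypass the dimensional restriction, so invoking the parametric regularity theory here is circular.

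The paper's actual mechanism is a star-shaped translation (sliding) argument. It proves a \textbf{Claim}: for each horizontal $\tau$ ($\tau\cdot\mathbf e=0$, $|\tau|\le1$) and small $\eps>0$, the translated-and-dilated radial graph $X=e^{v(z)+K\eps}z+\eps\tau$ lies above $X=e^{v(z)}z$ on the common domain, where $K$ is large relative to the Lipschitz constant of the barriers. The point is that the translate of the subgraph of $v$ is again the minimizer of a shifted functional (Lemma \ref{minuniq} guarantees its boundary is a single-valued radial graph $v^*$), and the barriers let one compare $v^*$ with $v$ along $\partial(\Omega\cap\Omega^*)$ so that the comparison principle Lemma \ref{uniqtrace} gives $v^*\ge v$. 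Running this over all admissible $(\eps,\tau)$ shows that every point of the graph has a fixed-opening radial cone lying above the graph, and this cone condition is what yields $v\in C^{0,1}(\Omega)$ --- without any appeal to parametric regularity. Your proposal is missing this translation/cone-condition idea, which is the heart of the proof.
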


The main ingredient it the proof of Theorem \ref{bigdomain} is the
following proposition which guarantees the existence of lower and upper
barriers. The existence of such barriers can be obtained in a
straightforward way using the method of \cite{Sp}. We will sketch the main
steps of the proof.

\begin{prop}\label{bar} Let $\phi \in C^2(\s^n_+)$ and assume the
solvability condition of Theorem $\ref{bigdomain}$, Then the  Dirichlet problem
\eqref{problem}-\eqref{bd} admits lower and upper barriers.
\end{prop}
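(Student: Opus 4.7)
The plan is to adapt the barrier construction of Spruck~\cite{Sp}. Given $\phi \in C^2(\s^n_+)$, I would first extend $\phi$ to a $C^2$ function $\Phi$ on a neighborhood of $\overline{\Omega}$ in $\s^n$, and set $d(z) = \mathrm{dist}_{\s^n}(z, \partial \Omega)$, which is $C^2$ on a tubular neighborhood $\mathcal{N}_\delta = \{0 < d < \delta\}$ of $\partial \Omega$ in $\Omega$ by our $C^2$ assumption on $\partial\Omega$. The proposed upper barrier takes the form
\[
\overline{\varphi}(z) = \Phi(z) + \mu\, g(d(z)),
\]
where $g(t) = \log(1+Kt)$ (so $g(0)=0$, $g'(0)=K$, $g''<0$) and $K,\mu > 0$ are large parameters to be chosen. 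Then $\overline{\varphi} = \phi$ on $\partial \Omega$, and $|\nabla \overline{\varphi}| \to \infty$ as $d \to 0^+$.

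The heart of the proof is verifying that $\overline{\varphi}$ is a supersolution to \eqref{local} on $\mathcal{N}_\delta$, i.e.\ that
\[
\frac{1}{W}a^{ij}(\overline{\varphi})\,\overline{\varphi}_{ij} - \frac{n}{y}\left(H + \frac{\textbf{e}\cdot \nabla \overline{\varphi}}{W}\right) \leq 0.
\]
In the regime $|\nabla \overline{\varphi}|$ large near $d=0$, the matrix $a^{ij} = \sigma^{ij} - \overline{\varphi}^i\overline{\varphi}^j/W^2$ is asymptotically the projection onto the tangent space of the level sets of $d$, and the unit normal $\nu$ to the radial graph \eqref{normal} approximates the outward unit normal to the radial cone $X = e^w z$ over $\partial \Omega$. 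A direct Taylor expansion then shows that the leading contribution of $\frac{1}{W}a^{ij}\overline{\varphi}_{ij} - \frac{n}{y}\frac{\textbf{e}\cdot\nabla\overline\varphi}{W}$ at a point of $\partial \Omega$ is $-\tfrac{n h}{y}$, where $h$ is the hyperbolic mean curvature of the cone; this relies on the relation \eqref{hmc} between Euclidean and hyperbolic mean curvature, together with the identification of the trace of $a^{ij} \nabla^2 d$ on tangent directions with the Euclidean mean curvature of $\partial \Omega \subset \s^n$ regarded as a submanifold at height $e^{\Phi}$. Since $h > |H| \geq H$ by hypothesis, and by continuity $h - H$ remains strictly positive in a neighborhood of $\partial \Omega$, one may choose $K$ sufficiently large so that the remaining $O(1/K)$ error terms are absorbed and the supersolution inequality holds throughout $\mathcal{N}_\delta$ (after possibly shrinking $\delta$).

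The main obstacle is the explicit asymptotic calculation that identifies the leading order of the operator with the cone mean curvature $h$; this is the core computation carried out in \cite{Sp}, and in our setting it amounts to tracking carefully how the normal curvature of the level sets $\{d=\mathrm{const}\}$ couples with the ambient factor $e^{\Phi}$ via \eqref{hmc}. Once the local supersolution is built, I would choose $\mu$ large enough that $\overline{\varphi}$ exceeds any fixed $L^\infty$-bound for candidate solutions on the inner boundary $\{d=\delta\}$, and then extend $\overline{\varphi}$ to $\Omega \setminus \mathcal{N}_\delta$ by a suitable large constant (or by taking the maximum with the constant upper barrier $\overline{\varphi}_0$ from Remark~\ref{horosphere}). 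The lower barrier $\underline{\varphi} = \Phi - \mu\,g(d)$ is constructed by the completely symmetric argument, now invoking $h > -H$.
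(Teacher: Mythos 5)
Your construction follows the same strategy as the paper's proof of Proposition~\ref{bar} (which in turn follows \cite{Sp}): a barrier of the form $\phi + \psi(d)$ with $\psi$ logarithmic in $d$, so that $\psi'(d) \to \infty$ as $d \to 0^+$; then, using $|\nabla d|=1$, $d^i d_{ij}=0$, $\sigma^{ij}d_{ij}=\Delta d$ and the asymptotic degeneracy of $a^{ij}$ onto directions tangent to the level sets of $d$, the leading term in $M\overline{v}$ is $-\tfrac{\psi'}{y\sqrt{1+\psi'^2}}\bigl((n-1)y\mathcal{H}_{\partial\Omega}+n\,\textbf{e}\cdot N\bigr)$, which via \eqref{formula1} is $-\tfrac{nh}{y}$ to leading order; the negative $\psi''$ term and the solvability gap $h - |H| \ge 2\epsilon_0$ (available by compactness of $\partial\Omega$) absorb the $O(1/\psi')$ errors from $\nabla\phi$. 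Your parametrization $\mu g(Kd)$ differs cosmetically from the paper's $\tfrac{1}{K}\log(1+\beta d)$ with $\beta=K^2 e^{MK}$, $\delta=K^{-2}$, but achieves the same effect.

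One small sign slip worth flagging: for the upper barrier you must show $M\overline{v}\le \tfrac{nH}{y}$, and since the leading term is $-\tfrac{nh}{y}$, what you actually need is $-h < H$, i.e.\ $h > -H$; for the lower barrier (subsolution, $M\underline{v}\ge \tfrac{nH}{y}$ with leading term $+\tfrac{nh}{y}$) you need $h > H$. You have these two assignments reversed. Both inequalities follow from $h>|H|$, so the conclusion is unaffected, but the correct bookkeeping is the mirror image of what you wrote. Also, since a barrier in the sense of the definition preceding the proposition only needs to be defined on the collar $N_\delta$, the final step of ``extending $\overline{\varphi}$ to all of $\Omega$'' is unnecessary: it suffices that $\overline{\varphi}\ge \sup_{\partial\Omega}\phi$ on $\partial N_\delta\cap\Omega$, which your choice of $\mu$ large (or the paper's choice of $\beta$) accomplishes.
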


First, we recall the definition of barriers. Let $\phi$ be a Lipschitz
continuous function on $\partial \Omega$. For $z \in \Omega$,
denote by $d(z)$ the distance of $z$ from $\partial \Omega$ in the spherical metric.

An upper barrier $\overline{v}$ relative to the Dirichlet problem
\eqref{problem}-\eqref{bd} in $\Omega$ is a Lipschitz continuous
function defined in a neighborhood $N_{\delta}=\{z \in \Omega  :
d(z)< \delta\}$ of $\partial \Omega$, such that $\overline{v}$ is
a supersolution in $N_\delta$ and

\be\label{barriercond} \overline{v}=\phi \ \text{on} \ \partial
\Omega; \quad \overline{v} \geq \sup_{\partial \Omega}\phi \
\text{on} \
\partial N_\delta \cap \ \Omega. \ee

\

\noindent Analogously, one can define a lower barrier $\underline{v}$ as a
subsolution in $N_\delta$ such that

\be \underline{v}=\phi \ \text{on} \ \partial \Omega; \quad
\underline{v} \leq \inf_{\partial \Omega}\phi \ \text{on} \
\partial N_\delta \cap \ \Omega. \ee

\begin{rem}\label{idea}  Let $N$ be the interior unit normal (in the metric of the sphere) to $\partial \Omega.$ Then the Euclidean mean curvature $h_{E}$ of $C$ restricted to $\partial \Omega$ is given by $h_{E}=\frac {n-1}{n}
 \mathcal{H}_{\partial \Omega}$ and so

 \be \label{formula1}
h=yh_{E}+\textbf{e}\cdot N=\frac{n-1}{n}y\mathcal{H}_{\partial \Omega} + \textbf{e}\cdot N~.
 \ee

\

\noindent  Moreover, if
 $\mathcal{H}_{\partial \Omega}(z)$ denotes the mean curvature at $z$ of the parallel hypersurface  at distance $d(z)$ to
 $\partial \Omega$ passing through $z$, then

 \be \label{formula2}
(n-1)\mathcal{H}_{\partial \Omega}(z)=-\textrm{div}_{z}\nabla d=-\Delta_{z}d(z).
\ee

\

We shall use these formulae in the construction of barriers in
Proposition \ref{bar}, which now follows.
\end{rem}

\textit{Proof of Proposition $\ref{bar}$.} The proof follows the argument of \cite{Sp} and is  similar to the Euclidean case (see for example \cite{GT},\cite{G}.) For completeness, we present
a sketch of the proof.

We proceed to construct an upper barrier $\overline{v}$. According
to the definition of upper barrier and equation \eqref{local} we
need to show that

\begin{equation}
M\overline{v}:=\frac1Wa^{ij}\overline{v}_{ij}-\frac{n}{y}\textbf{e}\cdot
\frac{\nabla \ov{v}}W \leq \frac{nH}y \ \text{in} \ N_{\delta},
\end{equation}

\

\noindent for some $\delta$ to be chosen later.
Here

$$a^{ij}=\sigma^{ij} -
\frac{\ov{v}^i\ov{v}^j}{W^2},$$ and

$$W = \sqrt{1+|\nabla \ov{v}|^2}.$$

\

\noindent Also we must satisfy condition
\eqref{barriercond}. Let us pick

$$ \ov{v}(z)= \phi(z) +
\psi(d(z)),$$

\

\noindent where $\psi$ is a $C^2$ function on $[0,\delta]$
satisfying

\be\label{psi} \psi(0)=0, \psi'(t) \gg 1,
\psi''(t)<0,\ee

\be\label{maxbound}\psi(\delta) \geq 2
\sup_{\Omega}|\phi|=M.\ee

\

Using   $|\nabla d|=1,~d^i d_{ij}=0$ and  $\sigma^{ij}d_{ij}=\Delta d$, \eqref{psi}
and the definition of $a^{ij},$  we find

$$
M\overline{v} \leq
\frac{\psi'}{\sqrt{1+\psi'^2}}(\Delta d-\frac{n}y \textbf{e}\cdot \nabla d)+\frac{\psi''}{(1+\psi'^2)^{\frac32}}+
O(\frac1{y\sqrt{1+\psi'^2}}).
$$

\

\noindent Recalling Remark \ref{idea} we can express this as

\be\label{euclid2}
M\overline{v} \leq
-\frac{\psi'}{y\sqrt{1+\psi'^2}}((n-1)y\mathcal{H}_{\partial \Omega}(z)+n\textbf{e}\cdot N(z))+\frac{\psi''}{(1+\psi'^2)^{\frac32}}+
O(\frac1{y\sqrt{1+\psi'^2}}).
\ee

\

Let $\psi(t) = \frac{1}{K}\log(1+\beta t)$ where $\beta=K^2 e^{MK}$ and $\delta=K^{-2}$. Then \eqref{psi} and
\eqref{maxbound} are satisfied as

$$\psi'(t) \geq \frac{\beta}{K(1+\beta \delta)}>K(1-e^{-MK}),~\psi''=-K\psi'^2, $$ and

$$\psi(\delta) = \frac{1}{K}\log(1+\beta\delta) \geq M.$$

\

\noindent Assume  the strict solvability condition $h\geq|H|+2\epsilon_0$. Then

\be \label{euclid3} (n-1)y\mathcal{H}_{\partial
\Omega}(z)+n\textbf{e}\cdot N(z)\geq n(|H|+\epsilon_0) \ee

\medskip

\noindent in $N_{\delta}$ for small $\delta$. Hence combining \eqref{euclid2}, \eqref{euclid3}

\be \label{euclid4}
M\overline{v} \leq - \frac{\psi'}{\sqrt{1+\psi'^2}}\frac{n(|H|+\epsilon_0)}y-\frac{K\psi'^2}{(1+\psi'^2)^{\frac32}}+
O(\frac1{y\sqrt{1+\psi'^2}}).
\ee

\

\noindent Therefore we can choose $K$ large  so that $\overline{v}$ is an
upper barrier in $N_{\delta}$.  Analogously $\displaystyle\underline{v}=\phi-\frac{1}{K}\log(1+\beta d)$ is
a lower barrier in $N_{\delta}$. \qed

\begin{rem} \label{strictbarrier} Note that when the strict solvability condition $h\geq|H|+2\epsilon_0$ is satisfied, we obtain gradient and continuity estimates on $\partial \Omega$ that are independent of $\min_{\partial \Omega} y $.\end{rem}

\begin{rem} Under certain conditions we can sharpen the solvability condition to $h\geq |H|$.
Suppose $h=|H|$ at $P\in \partial \Omega$ and let

\[nh(s)=(n-1)y(z(s))\mathcal{H}_{\partial \Omega}(z(s))+n\textbf{e}\cdot N(z(s))\]

\medskip

\noindent along the (inward) geodesic orthogonal to $\partial \Omega$ starting at $P$. Note that

$$\dot{y}(s)=\textbf{e}\cdot N(s)$$ $$\ddot{y}(s)=-y(s).$$

\medskip

\noindent Hence from standard comparison theory
(see \cite{Sp})

\begin{align} \label{euclid5}
n\dot{h}(s)&= (n-1)(y \dot{\mathcal{H}}_{\partial \Omega}(s)+\mathcal{H}_{\partial \Omega}(s)\textbf{e}\cdot N(s))-ny(s)\\
&\geq (n-1)(y(s)(\mathcal{H}_{\partial \Omega}^2(s)+1)+\mathcal{H}_{\partial \Omega}(s)\textbf{e}\cdot N(s))
-ny(s). \nonumber
\end{align}

\

\noindent Using $\displaystyle\textbf{e}\cdot N=|H|-\frac{n-1}n y(P)\mathcal{H}_{\partial \Omega}$ at $P$ in \eqref{euclid5} gives

\begin{align*} \label{euclid6}
\frac{n}{n-1}\dot{h}(0)&\geq y(P)\mathcal{H}_{\partial \Omega}^2 +
\mathcal{H}_{\partial \Omega}\left(|H|-\frac{n-1}n
y(P)\mathcal{H}_{\partial \Omega}\right)-\frac1{n-1}y
\nonumber \\
&=\frac{y(P)}n \mathcal{H}_{\partial \Omega}^2 +|H|\mathcal{H}_{\partial \Omega}-\frac1{n-1}y.
\end{align*}

\

Hence if $$\displaystyle \mathcal{H}_{\partial
\Omega}>-\frac{n}2\left(-|H|+\sqrt{H^2+\frac{4y^2}{n(n-1)}}\right),$$

\

\noindent then $\dot{h}(0)>0$ so we obtain from \eqref{euclid2}
(for small $\delta$)

$$
M\overline{v}+\frac{n|H|}y \leq -\frac{K\psi'^2}{(1+\psi'^2)^{\frac32}}+O(\frac1{y\sqrt{1+\psi'^2}})<0
$$

\

\noindent if we choose $K$ large enough (but now depending on $\min_{\partial \Omega}y$).
\end{rem}
\

We now introduce some notation which we will use in the proof of
Theorem \ref{bigdomain}.

\smallskip

Let $K$ be a fixed constant, $\eps >0$, and let $\tau, |\tau| \leq
1$ be a vector lying in the hyperplane $\tau \cdot \textbf{e} =0$.
For any bounded function $w$ over a subdomain $\Omega \subset
\s_+^n$ we denote by $w^*=w^*(\tau,\eps)$ the corresponding
possibly multivalued function such that the surface
$X=e^{w+K\epsilon} z + \tau\epsilon$ can be represented as
$X=e^{w^{*}}z$ over its projection $\Omega_w^*$ on the unit upper
hemisphere $\s_+^n$. Precisely, let $e^{w(z)+K\eps} z+\tau
\eps=e^{w^*(z^*)} z^*$, with $z^*\in \Omega^*_w \subset  S^n_+$
and write $\rho=e^{w(z)},\rho^*=e^{w^*(z^*)}$. Then
\begin{align*}&z^*=\frac{z+\eps e^{-K\eps}\dfrac{\tau}{\rho}}{\sqrt{1+2\eps
e^{-K\eps} \dfrac{z\cdot
\tau}{\rho}+\dfrac{e^{-2K\eps}\eps^2|\tau|^2}{\rho^2}}},\\
& \ \\
&\rho^*=\sqrt{e^{2K\eps}\rho^2+2e^{K\eps}\eps \rho z \cdot \tau +
\eps^2|\tau|^2}.\end{align*}

\medskip

\noindent Note that if $w$ is Lipschitz with constant $L$, then
the mapping $z \rightarrow z^*$ is injective for $\eps \leq
\eps_0(L)$ and hence $w^*$ is well-defined and also Lipschitz.
Moreover, if $\overline{w}$ and $\underline{w}$ are both Lipschitz
with constant $L$ and $\overline{w}=\underline{w}=\varphi $ on
$\partial \Omega$ , then for $\eps \leq \eps_0(L)$,
$\Omega^*_{\overline{w}}=\Omega^*_{\underline{w}}$.

\smallskip

We are now ready to prove our Theorem.

\

\textit{Proof of Theorem $\ref{bigdomain}$}. Theorem
\ref{existmin} together with Proposition \ref{bar} guarantees the
existence (and uniqueness) of a minimizer $v$ to
$\mathcal{I}^\phi$ which is in the class $BV_M(\Omega) \cap
C(\partial \Omega).$ We need to show that $v \in C^{0,1}(\Omega)$.
Towards this aim we will prove the following claim.

\smallskip

 \noindent \textbf{Claim}: For any vector $\tau$, $|\tau| \leq 1$,
such that $\tau \cdot \textbf{e}=0$, and for all small $\epsilon
>0$, the hypersurface $X = e^{v(z)+K\epsilon}z + \epsilon \tau$ is
above the hypersurface $X=e^{v(z)}z$ in their common domain of
definition.

\smallskip

 \noindent Here $K$ denotes a big constant depending on
the Lipschitz constant of the barriers from Proposition \ref{bar}.


First we observe that the existence of barriers implies the
existence of two Lipschitz functions $\underline{v},\overline{v}$
such that $\underline{v} \leq v +K\epsilon \leq \overline{v}$
(here we are using Corollary \ref{maxprinc}), and $\underline{v}=
\overline{v}=\phi+K\epsilon$ on $\partial \Omega$.
Correspondingly, using the notation introduced before the proof,
$\underline{v}^*$ and $\overline{v}^*$ are Lipschitz functions for
small $\eps$, and
$\Omega^*_{\overline{v}}=\Omega^*_{\underline{v}}:=\Omega^*$.

We wish to prove that $v^*$ is a (single-valued) function over
$\Omega_v^*=\Omega^*$. Then the desired claim consist in showing
that $v^* \geq v$ in $\Omega \cap \Omega^*$, and it will follow
from the comparison principle Lemma \ref{uniqtrace}.

We use the notation at the end of Section 3. Let $C$ be the
radial cone over $ \Omega$, and set

$$\mathcal{V}+\epsilon\tau := \{E \subseteq C +
\epsilon\tau : E \ \text{measurable}, \ \underline{V} + \epsilon\tau \subseteq E \subseteq \overline{V}
+ \epsilon\tau\},$$

\medskip

\noindent where $A+ \epsilon \tau := \{x + \epsilon \tau, x\in
A\}$ for all $A \subset \R^{n+1}.$

\noindent Also, if $C^*$ is the radial cone over $
\Omega^*$, we let

$$\mathcal{V}^* = \{E \subseteq C^*:
E \ \text{measurable,} \ \underline{V}^* \subseteq E \subseteq
\overline{V}^*\},$$

\medskip

\noindent where $\underline{V}^*, \overline{V}^*$ denote
respectively the subgraphs in $C^*$ of $X=e^{\underline{v}^*}z$,
and $X=e^{\overline{v}^*}z$.

Notice that there is a one-to-one correspondence between
competitors in the classes $\mathcal{V}+\epsilon\tau ,
\mathcal{V}^*$ and the associated energies differ by a constant
(recall the definition of $w^*$).

Hence, since the subgraph of $X=e^{v+K\epsilon} z + \epsilon\tau$
minimizes $\mathcal{F}$ in $\mathcal{V} + \epsilon\tau$, then the
subgraph of $X=e^{v^*} z$ is a minimizer to $\mathcal{F}$ in
$\mathcal{V}^*$, and by the uniqueness result Proposition
\ref{minuniq} it is a graph over $\Omega^*$.

Now, in order to apply the comparison principle Lemma
\ref{uniqtrace}, we need to show that

\begin{enumerate}
\item $v^* \geq v$ on $\partial \Omega^* \cap \overline{\Omega};$
\item $v^* \geq v$ on $\partial \Omega \cap \overline{\Omega^*};$
\end{enumerate}

\smallskip

\noindent where the inequalities above are meant in the trace
sense (note that the existence of barriers implies that $v^*$ has
a continuous trace on $\partial \Omega^* \cap \overline{\Omega},$
while $v$ has a continuous trace on $\partial \Omega \cap
\overline{\Omega^*}$).

In order to prove $(1)$, we will show that $v^*$ is greater than
the upper barrier $\overline{v}$ for $v$ on $\partial \Omega^*
\cap \overline{\Omega}.$ Let $z \in
\partial \Omega^* \cap \overline{\Omega},$ and let $x \in \partial
\Omega$ be such that

\begin{equation*}\label{equality1}
e^{v^*(z)}z = e^{v(x)+ K\epsilon}x + \tau \epsilon.
\end{equation*}

\smallskip

\noindent It follows that

$$|e^{v^*(z)} - e^{v(x)+K\epsilon}| \leq \epsilon$$

\noindent and

$$|x-z| \leq C\epsilon,$$

\smallskip

\noindent with $C$ depending on the $L^\infty$ norm of $v$. If $K$
is very large, these two inequalities imply that

\begin{equation}\label{finalstep}
v^*(z) \geq v(x) + K^*|x-z|,
\end{equation}

\medskip

\noindent where $K^*$ is larger that the Lipschitz constant of the
upper barrier $\overline{v}$. Since $v(x)=\overline{v}(x)$
equation \eqref{finalstep} clearly gives $(1)$.

Part $(2)$ follows in the same way, using the lower barrier for
$v^*$. Thus our claim is proved.

\smallskip

We now show that our claim implies the Lipschitz continuity of
$v$.

Let $z \in \Omega$ and let $\mathcal{C}= \mathcal{C}(z,\theta)$ be
the circular cone with vertex at $e^{v(z)}z$, axis $z$, and
opening $\theta$. Since $\Omega$ is a strict subdomain of
$\s_+^n$, it is above the hyperplane $y=\delta$ (recall that
$y=z_{n+1}$), and thus each point $x$ can be represented as:

\begin{equation}\label{repr}x=e^{v(z)}z+\alpha z + \beta
\sigma,\end{equation}

\medskip

\noindent with $|\sigma|=1$, $\sigma \cdot \textbf{e}=0$, $\alpha,
\beta \geq 0$, and $\beta/\alpha \leq C(\theta,\delta)$ with
$C(\theta,\delta) \rightarrow 0$ as $\theta \rightarrow 0$.

Indeed, each point $x$ in the cone $\mathcal{C}$ can be
represented as

$$x=e^{v(z)}z + \gamma(z + \eta z_\perp)$$

\medskip

\noindent with $z_\perp$ unit vector in $T_z(\s_+^n)$, $\gamma
\geq 0,$ and $0 \leq \eta \leq \tan \theta \rightarrow 0$ as
$\theta \rightarrow 0.$

Now, let us decompose

$$z_\perp = az+b\sigma$$

\smallskip

\noindent with

$$a=\frac{z_\perp \cdot \textbf{e}}{z\cdot
\textbf{e}}; \quad b=\sqrt{1+a^2}.$$

\medskip

 \noindent Hence $\sigma \cdot \textbf{e}= 0,
|\sigma|=1.$ Moreover,

$$|a| \leq 1/\delta, \quad b \leq 2/\delta$$

\medskip

\noindent because $z$ is above the hyperplane $y=\delta.$

Therefore,

$$x=e^v(z)z + \gamma[(1+\eta a)z + b\eta\sigma]$$

\medskip

\noindent with the ratio

$$\frac{b\eta}{1+\eta a}$$

\medskip

\noindent going to zero as $\theta$ goes to zero.

Now, given $x$ (represented as in \eqref{repr}) in a neighborhood
$N$ (in $\mathcal{C}$) of $e^{v(z)}z$, that is for $\alpha$ small,
we can choose $\epsilon$ such that

$$e^{v(z)+K\epsilon} = e^{v(z)}+\alpha,$$

\medskip

\noindent hence $\epsilon=O(\alpha)$. Moreover, since
$\beta/\alpha \leq C(\theta,\delta) \rightarrow 0$ as $\theta
\rightarrow 0$, by choosing $\theta$ small enough depending on $K,
\|v\|_\infty, \epsilon_0(L), \delta$ we can guarantee that $\beta
\leq \epsilon.$ Hence

$$x = e^{v(z)+K\epsilon} + \epsilon \tau.$$

\medskip

\noindent Thus the set $S(\epsilon, \tau)= \{X=e^{v(z)+K\epsilon}z
+\epsilon \tau, 0\leq \epsilon \leq \epsilon_0(L), |\tau| \leq 1,
\tau \cdot \textbf{e}=0\}$ contains the cone $N \cap
\mathcal{C}(z,\theta)$.





Therefore, according to our claim at each point of the surface
$X=e^{v(z)}z$, there exists a small radial cone of fixed opening
which is completely above the surface. This geometric property
translates in the fact that for $q \in \Omega$ in a neighborhood
of $z$ we have

$$e^{v(q)} \leq e^{v(z)} + C(\theta) |z-q|.$$

\medskip

\noindent Since $v$ is bounded, this implies the Lipschitz
continuity of $v$. \qed

\

We state two simple corollaries of Theorem \ref{existence2}.

\begin{cor}\label{smalldomains} Let $B_\rho(P)$ be a ball in $\s^n_+ \cap  \{y \geq \epsilon\}$, for any
$\epsilon >0$, and let $\phi \in C^2(\s^n_+).$ Then there exists a
constant $r_0=r_0(n,H,\epsilon)$ such that the
Dirichlet problem \eqref{problem}-\eqref{bd} is uniquely solvable in $C^{\infty}(B_\rho(P))$, for all $\rho \leq
r_0.$
\end{cor}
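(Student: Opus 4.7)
The plan is to verify the geometric solvability hypothesis $h>|H|$ of Theorem \ref{existence2} for geodesic balls of sufficiently small radius, and then invoke that theorem directly with boundary datum $\gamma(z)=e^{\phi(z)}z$.

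First I would compute $h$ on $\partial B_\rho(P)$ using formula \eqref{formula1} from Remark \ref{idea}. A geodesic sphere of radius $\rho$ in $\s^n$ is totally umbilic with all principal curvatures equal to $\cot\rho$, so $\mathcal{H}_{\partial B_\rho(P)}\equiv \cot\rho$. Because $B_\rho(P)\subset\s^n_+\cap\{y\ge\epsilon\}$ we have $y\ge\epsilon$ on $\partial B_\rho(P)$, and $|\textbf{e}\cdot N|\le 1$ trivially. Hence
\[
h \;=\; \frac{n-1}{n}\,y\,\cot\rho + \textbf{e}\cdot N \;\ge\; \frac{n-1}{n}\,\epsilon\,\cot\rho \;-\; 1
\]
on $\partial B_\rho(P)$. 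Since $\cot\rho\to\infty$ as $\rho\to 0^+$, there exists $r_0=r_0(n,H,\epsilon)>0$ such that $h>|H|$ on $\partial B_\rho(P)$ for every $\rho\le r_0$.

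With $r_0$ fixed in this way, I would apply Theorem \ref{existence2} to $\Omega=B_\rho(P)$, whose boundary is real-analytic (in particular $C^2$), with boundary radial graph $\gamma(z)=e^{\phi(z)}z$, $z\in\partial B_\rho(P)$, which is $C^2$ because $\phi\in C^2(\s^n_+)$. The theorem produces a unique smooth radial graph $\Sigma$ of constant hyperbolic mean curvature $H$ over $B_\rho(P)$ with boundary $\gamma$. Writing $\Sigma$ as $X=e^{v(z)}z$ gives a unique $v\in C^\infty(B_\rho(P))$ solving \eqref{problem}, and by construction $v=\phi$ on $\partial B_\rho(P)$, i.e.\ \eqref{bd} holds.

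There is essentially no obstacle beyond the two observations already made: that the principal curvatures of a small geodesic ball blow up like $1/\rho$, and that the pointwise lower bound $y\ge\epsilon$ lets this blow-up dominate the bounded term $\textbf{e}\cdot N$ uniformly on the boundary. Uniqueness is inherited directly from the uniqueness clause of Theorem \ref{existence2}.
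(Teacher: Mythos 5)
Your proof is correct and is precisely the argument the paper has in mind (the paper presents this corollary without proof, as an immediate consequence of Theorem \ref{existence2}). The two key facts you use are the right ones: a geodesic sphere of radius $\rho$ in $\s^n$ is totally umbilic with $\mathcal{H}_{\partial B_\rho(P)}=\cot\rho$, consistent with the paper's normalization $(n-1)\mathcal{H}_{\partial\Omega}=-\Delta_z d$ since $\Delta_z r=(n-1)\cot r$; and the lower bound $y\ge\epsilon$ on $\partial B_\rho(P)$ together with $|\mathbf{e}\cdot N|\le 1$ forces $h\ge\frac{n-1}{n}\epsilon\cot\rho-1\to\infty$ as $\rho\to 0^+$, so the solvability condition $h>|H|$ holds once $\rho\le r_0(n,H,\epsilon)$ with, say, $\cot r_0=\frac{n(1+|H|)}{(n-1)\epsilon}$. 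Applying Theorem \ref{existence2} to $\Omega=B_\rho(P)$ with boundary datum $\gamma(z)=e^{\phi(z)}z$ (which is $C^2$ since $\phi\in C^2$ and $\partial B_\rho(P)$ is real-analytic) then gives existence, uniqueness, and smoothness of $v$ exactly as you state. One minor remark: the factor $\frac{n-1}{n}$ makes this argument vacuous for $n=1$, but that degenerate case is outside the scope of the paper.
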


\begin{cor}\label{caps} Let $S_\epsilon$ be the spherical cap $\s^n_+ \cap  \{y >\epsilon\}$, for any
$\epsilon >0$, and let $\phi \in C^2(\s^n_+).$ Then  the
Dirichlet problem \eqref{problem}-\eqref{bd} is uniquely solvable in $C^{\infty}(S_\epsilon).$
\end{cor}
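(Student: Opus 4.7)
The plan is to apply Theorem \ref{existence} directly to $\Omega = S_\eps$, with boundary datum the radial graph $X = e^{\phi(z)} z$, $z \in \partial S_\eps$. Since $\phi \in C^2(\s_+^n)$ and $\partial S_\eps = \{z \in \s^n : z_{n+1}=\eps\}$ is a $C^\infty$ small sphere, the regularity of $\partial \Omega$ and the continuity of the boundary graph are automatic. The only hypothesis requiring verification is the strict solvability condition $h > |H|$, and the whole argument reduces to computing $h$ on $\partial S_\eps$.

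To compute $h$ I would apply formula \eqref{formula1} from Remark \ref{idea}. At every $z \in \partial S_\eps$, $y = \eps$. The spherical gradient of the height function $y$ is $\textbf{e} - y z$, which has length $\sqrt{1-\eps^2}$ at points of $\partial S_\eps$, so the interior unit normal in $\s^n$ is
\[
N \;=\; \frac{\textbf{e} - \eps z}{\sqrt{1-\eps^2}}, \qquad \textbf{e}\cdot N \;=\; \sqrt{1-\eps^2}.
\]
Because $\partial S_\eps$ is a geodesic sphere of radius $\arccos\eps$ around the north pole of $\s^n$, its normalized mean curvature on $\s^n$ is $\mathcal{H}_{\partial S_\eps} = \cot(\arccos\eps) = \eps/\sqrt{1-\eps^2}$. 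Plugging these into \eqref{formula1} and simplifying gives
\[
h \;=\; \frac{n-1}{n}\cdot\frac{\eps^2}{\sqrt{1-\eps^2}} + \sqrt{1-\eps^2} \;=\; \frac{n-\eps^2}{n\sqrt{1-\eps^2}}.
\]

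A one-line algebraic check then yields $h^2 - 1 = \eps^2\bigl(\eps^2 + n(n-2)\bigr)/\bigl(n^2(1-\eps^2)\bigr)$, which is strictly positive for every $\eps \in (0,1)$ and every $n \geq 2$. Hence $h > 1 > |H|$, the hypothesis of Theorem \ref{existence} is met, and that theorem furnishes the unique smooth radial graph over $S_\eps$ with the prescribed boundary, i.e.\ the desired $C^\infty(S_\eps)$ solution of \eqref{problem}--\eqref{bd}. The only nonroutine step is the geometric identification of $\mathcal{H}_{\partial S_\eps}$ and $N$; once these are in hand the bound $h > 1$ is immediate, and no additional barrier construction or PDE analysis is required beyond what is already packaged inside Theorem \ref{existence}.
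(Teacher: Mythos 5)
Your proof is correct and follows exactly the route the paper intends: the paper states Corollary~\ref{caps} without proof as a ``simple corollary'' of Theorem~\ref{existence2}, so the only content is to verify the solvability condition $h>|H|$, which you do by computing $\textbf{e}\cdot N=\sqrt{1-\eps^2}$ and $\mathcal H_{\partial S_\eps}=\eps/\sqrt{1-\eps^2}$ on the geodesic sphere $\{y=\eps\}$ and plugging into \eqref{formula1} to get $h=(n-\eps^2)/(n\sqrt{1-\eps^2})\ge 1>|H|$. The computation checks out (with $(n-1)\mathcal H_{\partial\Omega}=-\Delta d=(n-1)\cot(\arccos\eps)$ via \eqref{formula2}), and the conclusion is immediate.
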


\section{The interior gradient bound and the proof of Theorem \ref{anyd}}

\subsection{The interior gradient bound}In this subsection we prove the following interior gradient bound.

\begin{prop}\label{th2} Let $v$ be a
$C^{3}$ function satisfying equation \eqref{dirichlet} in
$B_{\rho}(P)\subset \{y \geq \epsilon\}$. Then \vspace{1mm}
$$W(P)\leq
C_1e^{\frac{C_2}{\rho^2}},$$

\vspace{1mm}

\noindent where $C_1,C_2$ are non-negative
constants depending only on $n,H,\epsilon$ and $
\|v\|_{L^{\infty}}$.
  \end{prop}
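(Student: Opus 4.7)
The plan is to establish the bound by a Bernstein-type maximum principle argument applied to an auxiliary function combining $\log W$ with a cutoff depending on the spherical distance $r(z) = d_{\s^n}(z,P)$ to the center $P$. I would work with the nondivergence form \eqref{local} rather than \eqref{dirichlet}.

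The first step is to derive an elliptic inequality for $\log W$. Differentiating \eqref{local} in a tangent direction $\tau_k$, multiplying by $v^k/W$ and summing, while commuting covariant derivatives on $\s^n$ (which produces lower order terms from the sphere's constant sectional curvature equal to $1$), should yield an inequality of the form
\[
a^{ij}(\log W)_{ij} \;\geq\; \theta\,\frac{a^{ij}W_i W_j}{W^2} \;-\; C\bigl(1+|\nabla v|\bigr),
\]
for some $\theta>0$, where $C = C(n,H,\epsilon,\|v\|_{L^\infty})$ and $a^{ij} = \sigma^{ij} - v^i v^j/W^2$ has eigenvalues in $[1/W^2,1]$. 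The weight $y^{-n}$, the inhomogeneity $nH/y$, and the sphere commutator terms all contribute to the lower order error, but are absorbed into $C$ using the hypothesis $y \geq \epsilon$.

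Next I would consider the auxiliary function
\[
\Phi(z) \;=\; \log W(z) + \lambda\, g(v(z)) + N\,\log\!\Bigl(1 - \tfrac{r(z)^2}{\rho^2}\Bigr),
\]
where $g$ is a suitable monotone function of $v$ and $\lambda,N>0$ are constants to be chosen. Since the cutoff tends to $-\infty$ as $r \to \rho$, the supremum of $\Phi$ over $\overline{B_\rho(P)}$ is attained at an interior point $z_0$. At $z_0$, the conditions $\nabla \Phi = 0$ and $a^{ij}\Phi_{ij} \leq 0$, combined with the inequality for $\log W$, produce a polynomial inequality in $W(z_0)$, $|\nabla v(z_0)|$, and the cutoff derivatives. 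Direct computation with the explicit cutoff gives $|\chi_i| \leq CN/(\rho^2-r^2)$ and $|\chi_{ij}| \leq CN/(\rho^2-r^2)^2$, and choosing $\lambda$ large enough so that $\lambda\, a^{ij}[g(v)]_{ij}$ (expressed via \eqref{local}) absorbs the $C|\nabla v|$ term yields $W(z_0) \leq C_1 e^{C_2/\rho^2}$. Since $r(P)=0$ so that $\Phi(P) = \log W(P) + \lambda g(v(P)) \leq \Phi(z_0)$ and $g(v)$ is bounded by $\|v\|_{L^\infty}$, the required bound on $W(P)$ follows.

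The main obstacle is in the derivation of the differential inequality for $\log W$ with the correct sign structure: the inhomogeneity $nH/y$ and the sphere curvature commutators both generate linear-in-$|\nabla v|$ contributions which must be dominated by the leading good term $a^{ij}W_iW_j/W^2$, and this is delicate because of the anisotropic degenerate ellipticity of $a^{ij}$, whose smallest eigenvalue is $1/W^2$ precisely in the direction of $\nabla v$. The $\lambda g(v)$ summand in $\Phi$ is the standard device for supplying an extra coercive term, and its inclusion exploits the equation itself to close the argument.
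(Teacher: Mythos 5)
Your proposal is a Korevaar--Bernstein interior gradient estimate with a cutoff that depends both on the spherical distance to $P$ and on $v$ itself, which is precisely the method the paper uses: the paper applies the maximum principle to $h=\eta W$ with $\eta=e^{K\phi}-1$ and $\phi=\bigl[-\tfrac{v}{2v(P)}+\bigl(1-(d_P/\rho)^2\bigr)\bigr]^+$, whereas you take the logarithm and split the cutoff additively as $\lambda g(v)+N\log\bigl(1-r^2/\rho^2\bigr)$; since $\log h=\log W+\log\eta$, these are the same device up to the precise form of the barrier. The paper does not rederive the elliptic inequality for $W$ from scratch but cites formula (4.16) of \cite{GS} for $\mathcal{L}W\geq -CW$, where $\mathcal{L}=a^{ij}\nabla_{ij}-\tfrac{2}{W}a^{ij}W_i\nabla_j-\tfrac{n}{y}\bigl(H\tfrac{\nabla v}{W}+\mathbf{e}\bigr)\cdot\nabla$; your analogue for $\log W$, as written, has dropped the first-order $\nabla W$ contribution coming from the inhomogeneity $nH/y$ and the weight $y^{-n}$, but this omission is harmless because at the interior maximum the condition $\nabla\Phi=0$ trades $\nabla W/W$ for $-\nabla(\text{cutoff})$, which you then control; the paper sidesteps the issue entirely by keeping that first-order term inside $\mathcal{L}$ so that it is absorbed before the cutoff is ever introduced. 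In short, the two arguments are the same Bernstein argument in different coordinates, and what the paper's choice of $\eta=e^{K\phi}-1$ buys over your additive split is a slightly cleaner bookkeeping of the cross term $a^{ij}\eta_i W_j$ against the $-\tfrac{2}{W}a^{ij}W_i\nabla_j$ part of $\mathcal{L}$.
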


\begin{proof} Define the following linear elliptic
operator

\begin{equation}\mathcal{L} \equiv a^{ij}\nabla_{ij}-\frac2W a^{ij}W_i
\nabla_j-\frac{n}{y}\left(H \frac{\nabla  v}{W} +\bf{e}\right)\cdot \nabla\ee

\smallskip

\noindent where  $a^{ij}$ and $W$ are as in \eqref{local},\eqref{W}.

Throughout the proof, the constants may depend on $n,H,\epsilon$
and $\|v\|_{L^{\infty}}$. One can compute that

\begin{equation}\mathcal{L}W \geq -CW \quad \text{in} \ B_\rho(P),\ee

\smallskip

\noindent (for
details we refer the reader to Theorem 4.2 in \cite{GS}, formula
(4.16)).

We will derive a maximum principle for the function $h=\eta(x)W$
by computing $\mathcal{L} h$. Without loss of generality we may
assume $1 \leq v \leq C_0$. A simple computation gives

\be
\mathcal{L}h \geq W (M\eta-C\eta), \label{eq1} \ee

\smallskip

\noindent where

\be
\label{eq2} M \equiv a^{ij}\nabla_{ij} -\frac{n}y(H\frac{\nabla
v}W+\bf{e})\cdot \nabla. \ee

\

\noindent Note that $\displaystyle Mv={\frac{nH}{yW}}$. Choose

\be \eta (z) \equiv
g(\phi(z)); \;\; g(\phi) = e^{K \phi} - 1, \ee

\smallskip

\noindent with the constant
$K > 0$ to be determined and

\[ \phi (z) =
\left[ -\frac{v(z)}{2v(P)} + \left(1 -
\left(\frac{d_P(z)}{\rho}\right)^{2}\right) \right]^{+}.
\]

\

\noindent Here $d_P(z)$ is the distance function (on the sphere) from $P$, the
center of the geodesic ball $B_{\rho}(P)$ .

Since $v$ is positive, $\eta(z)$ has compact support in
$B_{\rho}(P)$. We will  choose $K$ so that $M\eta>C\eta $ on the
set where $h>0$ and $W$ is large (here $M$ is as in \eqref{eq2}).

A straightforward computation gives that on the set where $h>0$,

\begin{align*}
M\eta&=g'(\phi)\left(a^{ij}\nabla_{ij}\phi-\dfrac{n}y\left(H\dfrac{\nabla  v}W+\textbf{e}\right)\cdot \nabla \phi\right)
+ g''(\phi)a^{ij}\nabla_i  \phi \nabla_j \phi\\
& \ \\
&=Ke^{K\phi}\left\{-\dfrac1{2v(P)}\frac{nH}{yW} - \dfrac2{\rho^2}(d_Pa^{ij}\nabla_{ij}d_P+a^{ij}\nabla_i d_P \nabla_j d_P)\right.
\\& \ \\ & \quad \left.-\dfrac{n}{\rho^2 y}\left( H\frac{\nabla  v}W+\textbf{e}\right)\cdot d_P\nabla d_P\right\} \\
& \ \\
&
\quad +K^2 e^{K\phi}a^{ij}\left(\dfrac{v_i}{2v(P)}+\dfrac2{\rho^2}d_P\nabla_i d_P\right)\left(\dfrac{
v_j}{2v(P)} +\dfrac2{\rho^2}d_P\nabla_j d_P\right).\end{align*}

\

\noindent Using the definition of $a^{ij}$ we find ($\langle \cdot
, \cdot \rangle$ denotes the inner product with respect to the
induced Euclidean metric on $\Sigma$)

\begin{align*}\displaystyle
& a^{ij}\left(\dfrac{v_i}{2v(P)}+\dfrac2{\rho^2}d_P\nabla_i d_P\right)\left(\dfrac{
v_j}{2v(P)} +\dfrac2{\rho^2}d_P\nabla_j d_P\right)=\\
& \ \\
&\dfrac{|\nabla v|^2}{4v(P)^2}+\dfrac{2d_P}{\rho^2v(P)} \langle
\nabla v, \nabla d_P \rangle
+\dfrac{4d_P^2}{\rho^4}\left(1-\left(\langle \dfrac{\nabla v}W\
\nabla d_P\rangle \right)^2\right)~.
\end{align*}

\

\noindent Hence,

$$
 M\eta -C\eta\geq e^{K\phi}\left\{ K^2\left(\frac1{8{C_0}^2 }-\frac1{W^2}
\left(\frac1{\rho^2}+\frac1{8{C_0}^2}\right)\right)-CK\frac1{\rho^2}-C\right\}. $$

\

\noindent Therefore on the set where $h>0$ and $\displaystyle W >1+4\frac{C_0}{\rho}$
 we find

$$ M \eta - C\eta\geq e^{K\phi}\left\{\frac{K^2}{16C_0^2}
-CK\frac1{\rho^2}-C\right\}. $$

\

\noindent Thus, the choice $\displaystyle K=16CC_0\left(1+\frac{C_0}{\rho^2}\right)$  gives

$$M \eta
-C\eta \geq 15Ce^{K\phi}>0~$$

\

\noindent on the set where $h>0$ and $\displaystyle W
>1+4\frac{C_0}{\rho}$ . Hence by (\ref{eq1}) and the maximum
principle, $\displaystyle W \leq 1+4\frac{C_0}{\rho}$ at the point $Q$ where $h$
achieves its maximum. Therefore

\[h(P)=(e^{\frac{K}2}-1) W(P)\leq h(Q) \leq  (1+4\frac{C_0}{\rho})(e^K-1),
\]

\

\noindent and hence

\be W(P)\leq e^{\frac{CC_0^2}{\rho^2}} \ee

\

\noindent for a slightly larger constant $C$. This proves Proposition \ref{th2}.\\
\end{proof}

\subsection{Smoothness of minimizers in any dimension.} In this subsection we remove the dimensional constraint and prove the regularity result in Theorem \ref{anyd}. The proof follows the lines of the Euclidean case. We present it for the sake of completeness.

\

\textit{Proof of Theorem \ref{anyd}}. We use a standard
approximation argument. Let $B=B_\rho(P)$ be a ball in $\Omega,$
with $\rho \leq r_0$ and  $r_0$ as in Corollary \ref{smalldomains}.

Denote by $\tilde{S}:= \textrm{proj}_\Omega S.$ Since $\tilde{S}$ satisfies
$H_{n-6}(\tilde{S}) =0$, there exists a sequence $S_k$ of open sets, such
that

$$S_k\supset\supset S_{k+1}, \quad k=1,2,3... \quad \bigcap_{k\in \mathbb{N}}S_k =\tilde{S}$$
and also

$$H_{n-1}(S_k \cap \partial B) \rightarrow 0.$$

\

Now let $\phi_k$ be a smooth function on $\partial B$ satisfying

$$\phi_k = v \ \text{in} \ \partial B
\setminus S_k$$

\begin{equation}\label{uniformbound}\sup_{\partial B} |\phi_k| \leq 2 \sup_{\partial
B} |v|. \end{equation}

\

\noindent Let $v_k$ be the unique solution to the
Dirichlet problem with boundary data $\phi_k$ on $\partial B$ (see
Corollary \ref{smalldomains}). The functions $v_k$'s are smooth in $B$
and also according to \eqref{uniformbound} and Theorem \ref{existmin}

\begin{equation}\label{maxv}\sup_{B}|v_k| \leq M( \sup_{\partial
B} |v|).\end{equation}

\

\noindent We also have that the $v_k$ minimizes $\mathcal{I}_B(\cdot)$ among
all competitors with boundary data $\phi_k$ (see Remark
\ref{sol=min}). Hence,

\begin{equation}\label{finalmin}
\mathcal{I}_B(v_k) \leq \mathcal{I}^{\phi_k}_{B}(w)
\end{equation}

\

\noindent for every $w \in BV(B).$ In particular, for $w=0$,

\begin{equation}\label{w=0}
\mathcal{I}_B(v_k) \leq |B|+ \int_{\partial B} |\phi_k| dH_{n-1}
\leq C
\end{equation}

\

\noindent where in the last inequality we used \eqref{uniformbound}.

From \eqref{maxv} and the a priori estimate of the gradient
(Proposition \ref{th2}) we conclude that the gradients $\nabla
v_k$ are equibounded in every compact subset of $B.$ Hence, by
Ascoli-Arzela we can extract a subsequence, which we still denote
by $v_k$, which converges uniformly on compact subsets of $B$ to a
Lipschitz continuous function $\tilde{v}$. Moreover, by the lower
semicontinuity of $\mathcal{I}_B(\cdot)$ combined with
\eqref{maxv} and \eqref{w=0} we obtain

$$\int_{B} |\nabla
\tilde{v} | \leq C$$

\

\noindent and therefore $\tilde{v} \in W^{1,1}(B).$

We claim that $\tilde{v}$ has trace $v$ on $\partial B.$
Assuming that the claim is true, then passing to the limit in
\eqref{finalmin} with $w=v$ and remarking that $\phi_k \rightarrow
v$ in $L^1(\partial B)$ we have

$$\mathcal{I}_B(\tilde{v}) \leq \mathcal{I}_B(v).$$

\

\noindent Thus the function $\tilde{v}$ also minimizes $\mathcal{I}_B(\cdot)$
and by the uniqueness of minimizers (see Remark \ref{unique}) we obtain $v=\tilde{v}$ proving that
$v$ is Lipschitz continuous in $B.$ Hence, by elliptic regularity $v$ is analytic in $B$.

We are now left with the proof of the claim. Let $z_0 \in \partial B$  be a regular point for $v$. Then for $k$ large enough
 $z_0 \in \partial B  \setminus S_k$ and hence $\phi_j=v$ in a neighborhood of $z_0$ in $\partial B$, for all $j \geq k.$
 We can construct two $C^2$ functions $\underline{\phi}$ and $\ov{\phi}$ on $\partial B$, such that  $\underline{\phi}=\ov{\phi}= u$ in a neighborhood of $z_0$ and $\underline{\phi} \leq \phi_j \leq \ov{\phi} $ for all $j \geq k.$

 Now, we solve the Dirichlet problem with boundary data $\underline{\phi}, \ov{\phi}$ and denote the solutions respectively by
$\underline{v},\ov{v}$ (again we use Corollary \ref{smalldomains}). Then, $\underline{v} \leq v_j \leq \ov{v}$ for all $j \geq k$ and therefore $\underline{v} \leq \tilde{v} \leq \ov{v}$, which immediately yields $\tilde{v}(z_0)=v(z_0).$

Thus, $\tilde{v}=v$ at every regular point, which implies the desired claim since $H_{n-1}(\tilde{S})=0.$
  \qed

\medskip

We conclude this section by sketching the proof of Theorem \ref{global}.

\medskip

\textit{Proof of Theorem $\ref{global}$.} Assume that $\Gamma$ is represented by

$$X=e^\varphi z, z\in \partial
 \s_+^n,$$

\smallskip

\noindent with $\varphi \in C^2 (\overline{\s_+^n}).$
Then, according to Proposition \ref{bar} (see Remark \ref{strictbarrier})
we can find upper and lower barriers $\overline{v}$ and $\underline{v}$ coinciding with $\varphi$ on $\partial \s_+^n.$
For any small $\epsilon>0$, let $\psi_\epsilon$ be a smooth function on the spherical cap $S_\epsilon:=\s_+^n \cap \{y > \epsilon\}
$ such that
$\underline{v} \leq \psi_\epsilon \leq \overline{v}$ on the boundary of $\partial S_\epsilon.$ Let $v_\epsilon$
be a minimizer to
$\mathcal{I}^{\psi_\epsilon}_{S_\epsilon}(\cdot)$, which by our regularity
theory is smooth. By the comparison principle (Corollary \ref{maxprinc})
$\underline{v} \leq v_\epsilon \leq \overline{v}$ in $S_\epsilon$. By the interior a priori bound (Proposition
\ref{th2}) we can extract
a subsequence $v_{\epsilon_k}$ which converges uniformly on compacts of $\s^n_+$ to a function $v$ which solves
the equation and also $\underline{v} \leq v \leq \overline{v}$ in $\s_+^n.$ This implies the
continuity of $v$ up to the boundary.

Finally, if $\varphi$ is only continuous, we approximate it (from above and below) with $C^2$ functions, and conclude the argument by comparison with the barriers associated to the smooth approximated boundary data.

\qed

\end{document}